\newtheorem{theorem}{Theorem}[section]
\newtheorem{corollary}[theorem]{Corollary}
\newtheorem{lemma}[theorem]{Lemma}
\newtheorem{proposition}[theorem]{Proposition}
\theoremstyle{definition}
\newtheorem{definition}[theorem]{Definition}
\newtheorem{remark}[theorem]{Remark}
\newtheorem{example}[theorem]{Example}
\numberwithin{equation}{section}
\def\what{\widehat}
\def\One{{1\!\!1}}
\def\wtil{\widetilde}
\def\half{\frac{1}{2}}
\def\Pref{{\rm Pref}}
\newcommand{\lam}{\lambda}
\newcommand{\om}{\omega}
\newcommand{\sig}{\sigma}
\def\bbe{{\bf e}}
\newcommand{\R}{{\mathbb R}}
\newcommand{\Q}{{\mathbb Q}}
\newcommand{\Z}{{\mathbb Z}}
\newcommand{\C}{{\mathbb C}}
\def\N{{\mathbb N}}
\newcommand{\Nat}{{\mathbb N}}
\def\wt{\widetilde}
\def\A{{\mathcal A}}
\def\Rk{{\mathcal R}}
\def\Qk{{\mathcal Q}}
\def\Sf{{\sf S}}
\def\T{{\mathbb T}}
\def\bt{{\mathbf t}}
\def\bz{{\mathbf z}}
\def\be{\begin{equation}}
\def\ee{\end{equation}}
\newcommand{\eps}{{\varepsilon}}
\newcommand{\es}{\emptyset}
\newcommand{\const}{{\rm const}}
\def\Cc{{\mathscr M}}
\def\Mc{{\mathscr M}}
\def\a{a}
\def\ve1{\vec{1}}
\def\Ak{{\mathcal A}}
\def\bn{{\bf n}}
\def\mfr{\mathfrak m}
\begin{document}

%%%%% To ease editing, for IMPAN journals add:

\baselineskip=17pt

%%%%%%%%%%%%%%%%

\title[Lyapunov spectrum of the twisted cocycle for substitutions]{On the Lyapunov spectrum of the twisted  \\[1.1ex]cocycle for substitutions}

\author{Boris Solomyak}

\address{Boris Solomyak\\ Department of Mathematics,
Bar-Ilan University, Ramat-Gan, Israel}
\email{bsolom3@gmail.com}
\date{}

\begin{abstract}
The paper is devoted to the properties of a complex matrix ``twisted,'' otherwise called ``spectral,'' cocycle, 
associated with substitution dynamical systems. Following a recent finding 
of Rajabzadeh and Safaee \cite{RS2} of an invariant section for the twisted cocycle, we indicate that this implies presence of a zero Lyapunov
exponent. This has consequences for the spectral properties of substitution dynamical systems; in particular, this extends the scope and
simplifies the proof of singular spectrum for a large class of substitutions on two symbols. We also obtain some results on positivity of the top
exponent. In the appendix we compute the Lebesgue almost everywhere local dimension of spectral measures of some ``simple'' test functions, for
almost every irrational rotation. This sheds some light on the earlier work of Bufetov and the author \cite{BuSo20}, relating the
local dimension of spectral measures to pointwise Lyapunov exponents of the twisted cocycle.
It should be noted that the paper has some (mutually acknowledged) overlap with \cite[Appendix]{RS2}.

\end{abstract}

\subjclass[2020]{Primary 37A20, 37D25, 37B10; Secondary 47A11, 11R06}

\keywords{twisted cocycle, Lyapunov spectrum, singular spectrum}

\maketitle

\bigskip

\thispagestyle{empty}

\section{Introduction}

We continue the investigation of a complex matrix cocycle associated with a class of dynamical systems of parabolic type, such as
substitutions, $S$-adic systems, interval exchange transformations (IET's), and translation flows.
In particular, it proved to be useful in the study of quantitative weak mixing and spectral properties, such  as singular spectrum.
It appeared implicitly, as a {\em generalized matrix Riesz product} in \cite{BuSo14,BuSo18} and defined explicitly in \cite{BuSo20}, under the name
{\em spectral cocycle}. Some of the more recent studies in which the spectral cocycle played an important role
include \cite{BuSo22,Marshall24a,Marshall24b,Sol24}. 
A closely related, {\em twisted cocycle} 
over the Teichm\"uller translation flow, was introduced by 
Forni \cite{Forni22}. Avila, Forni, and Safaee \cite{AFS24} employed the twisted cocycle, essentially its $S$-adic version, in their work on
quantitative weak mixing for typical IET's. 
In a parallel development, a closely related object, named {\em Fourier cocycle}, was used in the study of diffraction spectrum for substitution tilings in the work of Baake et al. \cite{BFGR19,BGM19}. Here we focus on the class of substitution systems and adopt the term {\em ``twisted cocycle''}.
We do not attempt to provide an exhaustive bibliography, referring the reader to the surveys \cite{BuSo23,Forni24} 
and the recent preprint \cite{RS2}.

The goal of this note is to point out some consequences of a recent discovery by 
Rajabzadeh and Safaee \cite{RS2} of an invariant section for the twisted cocycle,
which implies  the presence of a zero Lyapunov exponent in the Lyapunov spectrum. Whereas the focus of \cite{RS2} is on interval exchanges, it was
realized that this phenomenon is more general; in fact, it applies to arbitrary $S$-adic systems and hence to substitutions. We should point out  that there
remain many open questions on Lyapunov spectrum of the twisted cocycle. It was shown in \cite{RS1} that for IET's the top Lyapunov exponent
of the twisted cocycle, defined over the toral extension of the Zorich (Rauzy-Veech) renormalization, is positive. In contrast,
 for a single substitution, where
the twisted cocycle is over the toral endomorphism induced by the transpose substitution matrix, little is known.

The paper is organized as follows. After introducing the background, we state the result on the invariant section and zero Lyapunov exponent, with
a quick proof. This is followed by applications of two kinds: 1) showing singular spectrum for a wide class of substitutions on 2 symbols; 2) providing
some necessary, and separately sufficient conditions for the Lyapunov spectrum to be degenerate. For constant length substitutions on 2 symbols
this has been done earlier in \cite{BCM20,Marshall24a}.
  
%%%%%%%%%%%%%%%%%%%%%%%%%

\section{Preliminaries}

\subsection{Substitution dynamical systems}
The standard references for the basic facts on substitution dynamical systems are \cite{Queff,Siegel}.
Consider an alphabet of $d\ge 2$ symbols $\Ak=\{0,\ldots,d-1\}$. Let  $\A^+$ be the set of nonempty words with letters in $\A$. 
A {\em substitution}  is a map $\zeta:\,\A\to \A^+$, extended to 
 $\A^+$ and $\A^{\N}$ by
concatenation. The {\em substitution space} is defined as the set of bi-infinite sequences $x\in \A^\Z$ such that any word  in $x$
appears as a subword of $\zeta^n(\a)$ for some $\a\in \A$ and $n\in \N$. The {\em substitution dynamical system}  is the left
shift on $\A^\Z$ restricted to $X_\zeta$, which we denote by $T$.
The {\em substitution matrix} $\Sf=\Sf_\zeta=(\Sf(i,j))$ is the $d\times d$ matrix, such that $\Sf(i,j)$ is the number
of symbols $i$ in $\zeta(j)$. The substitution is {\em primitive} if $\Sf_\zeta^n$ has all entries strictly positive for some $n\in \Nat$.
It is well-known that primitive substitution $\Z$-actions are minimal and uniquely ergodic, see \cite{Queff}.
We assume that the substitution is primitive and {\em non-periodic}, which in the primitive case is equivalent to the space $X_\zeta$ being infinite.
The length of a word $u$ is denoted by $|u|$. The substitution $\zeta$ is said to be of {\em constant length} $q$ if $|\zeta(a)|=q$ for all $a\in \A$, otherwise, it is of {\em non-constant length}.

\subsection{Twisted (spectral) cocycle}

Write $\bz = (z_0,\ldots,z_{d-1})$ and $\bz^v = z_{v_0}z_{v_1}\ldots z_{v_k}$ for a word $v = v_0v_1\ldots v_k\in \A^{k+1}$.
For a word $u\in \A^+$ and $a\in \A$ consider the polynomial in $\bz$-variables which keeps track of the returns to $a$:
\be \label{poly-return}
P_{u,a}(\bz) = \sum_{j\le |u|,\ u_j = a} \bz^{u_1\ldots u_{j-1}},
\ee
where $j=1$ corresponds to $\bz^{\es}=1$. 

\begin{example}
Let $u=01001011$. Then  $$P_{u,0}(\bz) = 1+z_0z_1 + z_0^2z_1+ z_0^3z_1^2, \ \ \ \ P_{u,1}(\bz) = z_0 + z_0^3z_1 + z_0^4z_1^2 + z_0^4 z_1^3.$$
\end{example}

Given a substitution $\zeta:\Ak\to \Ak^+$, suppose that
$\zeta(b) = u_1^{(b)}\ldots u_{k_b}^{(b)}$ for $b\in \Ak.$
Define a matrix-function on the $d$-torus $(S^1)^d$ as follows:
\be \label{def-matrix0}
\Mc_\zeta(\bz) = [\Mc_\zeta(z_0,\ldots,z_{d-1})]_{b,c} = {\Bigl( P_{\zeta(b),c} (\bz)\Bigr)}_{(b,c)\in \A^2},\ \ \ \bz\in (S^1)^d.
\ee

We are going to lift $\Mc_\zeta$ to the universal cover, in other words, write $z_j = \exp(-2\pi i \xi_j)$ and 
 $\xi = (\xi_0,\ldots,\xi_{d-1})$. Thus we obtain a $\Z^d$-periodic matrix-valued
 function, which we denote by the same letter.
 It can be written explicitly as follows:
  $\Cc_\zeta:  \R^d\to M_d(\C)$  (the space of complex $d\times d$ matrices): 
 \be \label{coc0}
\Cc_\zeta(\xi) = [\Cc_\zeta(\xi_0\ldots,\xi_{d-1})]_{(b,c)} := \Bigl(\!\!\!\! \!\!\!\!\!\sum_{j\le |\zeta(b)|,\ u_j^{(b)} = c}\!\!\!\!\!\!\! \exp\bigl(-2\pi i \sum_{k=1}^{j-1} \xi_{u_k^{(b)}}\bigr)\Bigr)_{(b,c)\in \A^2},\ \xi\in \R^d.
\ee

\begin{example}
Consider the substitution $\zeta: 0\mapsto 01200,\ 1\mapsto 120,\ 2 \mapsto 110$. Then
$$
\Mc_{\zeta}(z_0,z_1,z_2) = \left( \begin{array}{ccc} 1 + z_0z_1z_2 + z_0^2z_1z_2 & z_0 & z_0z_1 \\ z_1z_2 & 1 & z_1 \\ z_1^2 & 1+z_1 & 0 \end{array} 
\right), \ \ \ z_j = e^{-2\pi i \xi_j}.
$$
\end{example}

\medskip

Note that
$\Mc_\zeta(0) = \Sf_\zeta^{\sf T}$.
The entries of the matrix $\Mc_\zeta(\xi)$ are trigonometric polynomials with coefficients 0's and 1's that  are less than or equal to 
 the corresponding entries of $\Sf_\zeta^{\sf T}$
 in absolute value, for every $\xi\in \T^d$.
Crucially, the 
{\em cocycle condition} holds:
 for any two substitutions $\zeta_1,\zeta_2$ on the same alphabet,
\be \label{most}
\Cc_{\zeta_1\circ \zeta_2}(\xi) = \Cc_{\zeta_2}(\Sf^{\sf T}_{\zeta_1}\xi)\Cc_{\zeta_1}(\xi),
\ee
which is verified by a direct computation.

\begin{definition}
Suppose that $\det(\Sf_\zeta)\ne 0$, and consider the  endomorphism of the torus $\T^d$
\begin{equation}\label{torend}
E_\zeta: \xi \mapsto \Sf_\zeta^{\sf T} \xi \  (\mathrm{mod} \   \Z^d),
\end{equation} 
 which preserves the Haar measure $m_d$. Then
 \be \label{cocycle3}
\Cc_\zeta(\xi,n):= \Cc_\zeta\bigl((\Sf_\zeta^{\sf T})^{n-1}\xi \bigr)\cdots\Cc_\zeta(\xi),
\ee
 %a complex matrix cocycle over the endomorphism \eqref{torend},
  is called the {\em twisted (spectral) cocycle}, associated to $\zeta$, over the endomorphism \eqref{torend}.
 \end{definition}
 
Note that (\ref{most}) implies
\be \label{coc2}
\Mc_\zeta(\xi,n) =\Cc_{\zeta^n}(\xi),\ \ n\in \N.
\ee

Consider the pointwise upper  Lyapunov exponent of the cocycle at $\xi\in \T^d$, defined by
\be \label{Lyap2}
{\chi}_{\zeta,\xi}^+= \limsup_{n\to \infty} \frac{1}{n} \log \|\Cc_\zeta(\xi,n)\|.
\ee
If the limit exists, we drop the superscript ``+''.
Since  for every $\xi$ the absolute values of the entries of $\Cc_\zeta(\xi,n)$ are not greater than those of $\Sf_{\zeta^n}^{\sf T}$, we have
$$\chi_{\zeta,\xi}^+ \le \log\theta\ \ \mbox{ for all}\ \xi\in \T^d,$$
where $\theta$ is the Perron-Frobenius (PF) eigenvalue of $\Sf_\zeta$. The pointwise upper Lyapunov exponent $\chi_{\zeta,\xi}^+$ is invariant under the action of the endomorphism $E_\zeta$.

Now suppose that the endomorphism is ergodic,
that is, $\Sf_\zeta$ has no eigenvalues that are roots of unity (see \cite[Cor,\,2.20]{EWergbook}). 
Then by theorems of Furstenberg-Kesten \cite{FK} and Kingman \cite{Kingman}, there is a ``global'' Lyapunov exponent
\begin{eqnarray} \nonumber
\chi_{_{\scriptstyle{\zeta}}} = \chi_{_{\scriptstyle \zeta,\max}} & = & \lim_{n\to \infty} \frac{1}{n} \log \|\Cc_\zeta(\xi,n)\|,\ \ \mbox{for $m_d$-a.e.}
\ \xi\in \T^d\\[1.2ex]
& = &  \inf_n \frac{1}{n} \int_{\T^d} \log\|\Cc_\zeta(\xi,n)\|\,dm_d(\xi). \label{Lyap12}
\end{eqnarray}

The value of the Lyapunov exponent does not depend on the norm; it will be convenient to use the Frobenius norm of a matrix, defined by
$${\|(a_{ij})_{i,j}\|}^2_{\rm F} = \sum_{i,j} |a_{ij}|^2.$$ 

\begin{lemma}[{see e.g., \cite[Lemma 2.3]{BuSo22}}] \label{lem:lower}
For any $n\ge 1$, the function $\xi \mapsto \log\|\Cc_\zeta(\xi),n)\|$ is integrable, and
$$
\int_{\T^d} \log\|\Cc_\zeta(\xi,n)\|\,dm_d(\xi)\ge 0.
$$
Thus, $\chi_{_{\scriptstyle{\zeta}}}\ge 0$.
\end{lemma}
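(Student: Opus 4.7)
The plan is to work with the Frobenius norm $\|\cdot\|_F$---legitimate since $\chi_\zeta$ does not depend on the choice of norm---and to bound $\log\|\Cc_\zeta(\xi,n)\|_F$ from below by a constant multiple of $\log|\det\Cc_\zeta(\xi,n)|$. The average of the latter will then be controlled via the non-negativity of the logarithmic Mahler measure for nonzero integer polynomials.

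First, Hadamard's inequality combined with AM--GM gives, for any $d\times d$ matrix $M$,
\[
|\det M|^2 \;\le\; \prod_{b} \sum_{c} |M_{bc}|^2 \;\le\; \Bigl(\tfrac{1}{d}\|M\|_F^2\Bigr)^d,
\]
so that $\log\|M\|_F \ge \tfrac{1}{d}\log|\det M| + \tfrac{1}{2}\log d$. Applied to $M = \Cc_\zeta(\xi, n)$ and integrated over $\T^d$, this reduces the task to proving $\int_{\T^d} \log|\det\Cc_\zeta(\xi,n)|\,dm_d \ge 0$ together with integrability.

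Next I would use \eqref{coc2} to identify $\Cc_\zeta(\xi,n) = \Cc_{\zeta^n}(\xi)$, whose entries, by \eqref{coc0}, are polynomials in the variables $z_j = e^{-2\pi i \xi_j}$ with $\{0,1\}$-coefficients. Hence $Q_n(z_0,\ldots,z_{d-1}) := \det \Cc_{\zeta^n}(\xi)$ is a polynomial with integer coefficients, and evaluating at $\xi = 0$ yields $Q_n(1,\ldots,1) = \det(\Sf_\zeta^{\sf T})^n = (\det\Sf_\zeta)^n \ne 0$ under the standing assumption. Thus $Q_n$ is a \emph{nonzero} element of $\Z[z_0,\ldots,z_{d-1}]$. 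I would then invoke the classical fact that any such polynomial has nonnegative logarithmic Mahler measure,
\[
m(Q_n) \;:=\; \int_{\T^d}\log|Q_n|\,dm_d \;\ge\; 0,
\]
which is established by induction on $d$ via iterated Jensen's formula: for $d=1$ one has $m(Q) = \log|a_J| + \sum_i \log^+|\alpha_i|\ge 0$ since the leading coefficient $a_J$ is a nonzero integer; for $d>1$, writing $Q$ as a polynomial in $z_{d-1}$ with leading coefficient $q_J\in \Z[z_0,\ldots,z_{d-2}]$, Jensen in $z_{d-1}$ gives the pointwise bound $\int_{\T}\log|Q|\,d\sigma(z_{d-1})\ge \log|q_J|$, and integration over $\T^{d-1}$ together with the inductive hypothesis for $q_J$ concludes.

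The same iteration shows that $\log|Q_n|$ is integrable, and since $\|\Cc_\zeta(\xi,n)\|_F$ is uniformly bounded above by a constant depending only on $n$ (its entries are bounded trigonometric polynomials), $\log\|\Cc_\zeta(\cdot,n)\|_F$ lies in $L^1(\T^d)$. Assembling the pieces yields the claimed nonnegativity, and $\chi_\zeta \ge 0$ follows from \eqref{Lyap12}. The main conceptual obstacle is the multivariate Mahler-measure lower bound, but it reduces cleanly to an elementary induction on the number of variables, so no serious difficulty is anticipated.
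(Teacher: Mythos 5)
Your proof is correct, but it takes a genuinely different route from the paper at the key step. The paper works directly with ${\|\Cc_\zeta(\xi)\|}_{\rm F}^2=\sum_{b,c}|P_{\zeta(b),c}(\bz)|^2$: since each $\overline{P(\bz)}=P(z_0^{-1},\ldots,z_{d-1}^{-1})$ on the torus, this is a Laurent polynomial with integer coefficients, and after clearing negative powers by a monomial (which does not change $\log|\cdot|$ on $\T^d$) the integral of $\log{\|\Cc_\zeta(\xi)\|}_{\rm F}^2$ \emph{is} the logarithmic Mahler measure of a nonzero integer polynomial, hence $\ge 0$ by \cite[Lemma 3.7]{EW}; the case of general $n$ follows from $\Cc_\zeta(\xi,n)=\Cc_{\zeta^n}(\xi)$. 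You instead pass through the determinant via Hadamard plus AM--GM, reducing to $\mfr(\det\Cc_{\zeta^n})\ge 0$, and you correctly note that $\det\Cc_{\zeta^n}$ is a nonzero integer polynomial because it equals $(\det\Sf_\zeta)^n\ne 0$ at $\xi=0$. Both arguments ultimately rest on the same fact (nonnegativity of the logarithmic Mahler measure of a nonzero integer polynomial, which you reprove by iterated Jensen rather than citing), and both handle integrability the same way in spirit (bounded above by a constant, bounded below by an integrable function). The trade-offs: your route needs the standing hypothesis $\det\Sf_\zeta\ne 0$ (harmless here, since the cocycle over $E_\zeta$ is only defined under that assumption, but the paper's argument works without it), while it yields the marginally stronger conclusion $\int_{\T^d}\log\|\Cc_\zeta(\xi,n)\|_{\rm F}\,dm_d\ge\frac12\log d>0$; the paper's route is shorter and identifies the integral exactly as a Mahler measure, which is reused elsewhere in the article.
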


\begin{proof} We recall the argument for the reader's convenience. By Definition~\ref{coc0},  we obtain that for any substitution $\zeta$ on $\Ak$, the function ${\|\Cc_\zeta(\xi)\|}_{\rm F}^2$ is a sum of the squares of absolute values of
polynomials in $d$ variables $z_j = e^{-2\pi i \xi_j}$. This expression can be rewritten as a polynomial in the variables $z_j^{\pm 1}$. Multiplying it by
 $z_j^{\ell_j}$ for some $\ell_j\ge 0$, so as to get rid of the negative powers, we obtain that 
$$
 \log {\|\Cc_\zeta(\xi)\|}_{\rm F}^2 = \log|P_\zeta(z_0,\ldots,z_{d-1})|,
$$
 for some polynomial $P_\zeta$ with integer coefficients. Therefore,
 $$
 \int_{\T^d} \log {\|\Cc_\zeta(\xi)\|}_{\rm F}^2\,dm_d(\xi) = \mfr(P_\zeta)\ge 0,
$$
 where $\mfr(P_\zeta)$ is the logarithmic Mahler measure of a polynomial, see the definition below. Since
 $
 \Cc_\zeta(\xi,n) = \Cc_{\zeta^{n}}(\xi),
 $
the claim follows.
\end{proof}

%The lemma justifies \eqref{Lyap12}. 
By the Oseledets Theorem (one-sided, unless $\det(\Sf_\zeta) = \pm 1$,
in which case $E_\zeta$ is a toral automorphism), the entire Lyapunov spectrum is well-defined.
We will denote the maximal and the minimal Lyapunov exponents of the twisted cocycle
by $\chi_{_{\scriptstyle{\zeta}}} = \chi_{_{\scriptstyle \zeta,\max}}$ and
$\chi_{_{\scriptstyle \zeta,\min}}$ respectively.

\subsection{Mahler measure of a polynomial}

Mahler \cite{Mahler} defined the  measure of a polynomial in $d$ variables as follows:
$$
%\be \label{eq-Mahler}
M(P):= \exp \int_{\T^d} \log|P(z_0,\ldots,z_{d-1})| \,d\bt,
$$
%\ee
where $\bt = (t_0,\ldots,t_{d-1})$ and $z_j = e^{2\pi i t_j}$. The number $M(P)$ is called the {\em Mahler measure} and  
\be \label{eq-Mahler}
\mfr(P) = \log M(P)
\ee
is the {\em logarithmic Mahler measure} of the polynomial $P$. It is known that $M(P)$ is well-defined (i.e., $\bz \mapsto \log|P(\bz)|$ is integrable on 
$\T^d$), and $M(P)\ge 1$, provided that $P$ has integer coefficients, see \cite[Lemma 3.7]{EW}.
Moreover, there is a characterization when $M(P)=1$ for an integer polynomial $P$.
 In the single-variable case it is well-known from Kronecker's Lemma that $M(P)=1$ if and only if all roots of $P$ are of modulus one, i.e., $P$ is a 
cyclotomic polynomial. We need a generalization to the multi-variable case, see \cite[Theorem 3.10]{EW}. Several equivalent formulations (and different
proofs) are available; the next definition is from Smyth \cite{Smy81}.

\begin{sloppypar}
\begin{definition} \label{def:cyc}
An extended cyclotomic polynomial is a polynomial of the form 
\be \label{eq:cyc}
\psi(\bz) = z_0^{b_0}\cdots z_{d-1}^{b_{d-1}} \Phi(z_0^{v_0}\ldots z_{d-1}^{v_{d-1}}),
\ee 
where
$\Phi$ is a cyclotomic polynomial, $v_i$ are integers, not all of them equal to zero,
and $b_i= \max\{0, - v_i \deg(\Phi)\}$ are chosen
minimally, so that $\psi$ is a polynomial in $z_0,\ldots,z_{d-1}$. For each $d$, the symbol $K_d$ denotes the set of polynomials, which are products of extended
cyclotomic polynomials in $d$ variables and a monomial $\pm z_0^{c_0}\cdots z_{d-1}^{c_{d-1}}$.
\end{definition}
\end{sloppypar}

\begin{theorem}[Boyd \cite{Boyd80}, Smyth \cite{Smy81}] \label{th:Kron}
Let $P(z_0,\ldots,z_{d-1})$ be a polynomial with integer coefficients. Then $M(P)=1$ if and only if $P\in K_d$.
\end{theorem}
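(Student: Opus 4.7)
The plan is to handle the two implications separately, with the ``if'' direction being a routine computation and the ``only if'' direction being the substantive content. Throughout I will use the multiplicativity $M(PQ)=M(P)M(Q)$, which reduces the problem to examining the irreducible factors of $P$ one at a time.

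For the ``if'' direction I would treat the two factor types in turn. A monomial $\pm z_0^{c_0}\cdots z_{d-1}^{c_{d-1}}$ has unit modulus everywhere on $\T^d$, so its Mahler measure is trivially $1$. For an extended cyclotomic $\psi(\bz)=z_0^{b_0}\cdots z_{d-1}^{b_{d-1}}\Phi(z_0^{v_0}\cdots z_{d-1}^{v_{d-1}})$, the monomial prefactor again contributes nothing, so it suffices to evaluate $\int_{\T^d}\log|\Phi(e^{2\pi i(v_0 t_0+\cdots+v_{d-1}t_{d-1})})|\,d\bt$. Since not all $v_j$ vanish, the map $\bt\mapsto v_0t_0+\cdots+v_{d-1}t_{d-1}\pmod 1$ is a continuous surjective homomorphism $\T^d\to\T$ which pushes Haar measure to Haar measure; the change of variables formula therefore reduces the integral to $\int_\T\log|\Phi(e^{2\pi i s})|\,ds=\log M(\Phi)$, which equals $0$ by Jensen's formula combined with the fact that every cyclotomic polynomial has all its roots on the unit circle.

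For the ``only if'' direction I would proceed by induction on $d$. The base case $d=1$ is Kronecker's classical theorem: if $P\in\Z[z]$ satisfies $M(P)=1$, then its leading coefficient has absolute value one and all of its roots lie in the closed unit disk, and the same argument applied to the reciprocal polynomial forces them onto the unit circle; the pigeonhole principle applied to the family of integer polynomials $P_k(z)=\prod_i(z-\alpha_i^k)$, whose coefficients are uniformly bounded in $k$, then makes two of them coincide and forces each $\alpha_i$ to be a root of unity. For the inductive step I would invoke Lawton's theorem, which expresses $M(P)$ as a limit of one-variable Mahler measures along specializations $z_i\mapsto z^{n_i}$ whose exponent vectors satisfy a suitable divergence condition. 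Each specialized polynomial is an integer polynomial with Mahler measure at least one, so $M(P)=1$ forces infinitely many of these specializations to be products of cyclotomic polynomials times a monomial by the base case.

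The main obstacle will be the final combinatorial step: translating this information about infinitely many one-variable specializations into the precise structural decomposition required by Definition~\ref{def:cyc}. This is the crux of Smyth's argument in \cite{Smy81}, which in fact bypasses Lawton by directly analyzing linear relations on the exponent lattice supporting $P$ and reading off the extended cyclotomic factors from the vanishing of certain exponential sums on subtori. Writing out this step carefully is the delicate part; for a self-contained treatment I would follow Smyth's route, while noting that Boyd \cite{Boyd80} obtained an equivalent characterization by somewhat different means.
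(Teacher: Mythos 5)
First, a point of context: the paper does not prove Theorem~\ref{th:Kron} at all --- it is imported as a known result of Boyd and Smyth (see also \cite[Theorem 3.10]{EW}) --- so there is no internal proof to compare against, and I can only assess your sketch on its own terms. Your ``if'' direction is correct and essentially complete: multiplicativity of $M$, triviality of $M$ on monomials, and the fact that $\bt\mapsto v_0t_0+\cdots+v_{d-1}t_{d-1} \pmod 1$ is a surjective homomorphism $\T^d\to\T$ pushing Haar measure to Haar measure (valid since not all $v_j$ vanish) reduce everything to $M(\Phi)=1$ for cyclotomic $\Phi$, which is Jensen's formula. The base case $d=1$ (Kronecker, via pigeonhole on $\prod_i(z-\alpha_i^k)$) is also fine, modulo the routine remark that one first strips off a power of $z$ so that the constant term is nonzero.

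The ``only if'' direction, however, contains a genuine gap at the Lawton step. Lawton's theorem gives $M\bigl(P(z^{n_1},\ldots,z^{n_d})\bigr)\to M(P)=1$ along suitable sequences of exponent vectors, and each specialization is an integer polynomial with Mahler measure $\ge 1$; but convergence to $1$ from above does \emph{not} force any specialization to have Mahler measure exactly $1$. Because the degrees of the specialized polynomials are unbounded, no Northcott-type finiteness applies, and the inference ``$M(P_{\bn})\to 1$, hence infinitely many $P_{\bn}$ lie in $K_1$'' would require a uniform lower bound $M(Q)\ge c>1$ for non-cyclotomic $Q\in\Z[z]$ of arbitrary degree --- i.e., a positive answer to Lehmer's problem, which is open. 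So the inductive step as written does not go through; this is precisely why Boyd \cite{Boyd80} and Smyth \cite{Smy81} argue differently (unconditionally), via the exponent lattice and Newton-polytope/face structure of $P$ combined with one-variable Kronecker on carefully chosen restrictions. Moreover, even granting the flawed step, you explicitly defer the final structural conclusion --- recovering the factorization of Definition~\ref{def:cyc} from information about specializations --- to Smyth's paper, so what you have is an outline whose two hardest steps are respectively broken and cited rather than proved.
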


We will need also the well-known $L^2$ inequality for the Mahler measure:
\be \label{ineq:Mahler}
\mfr(P) \le \half \log\int_{\T^d} |P(\bz)|^2\,d\bt = \half\log\sum_{\bn \in \Z^d} |a_{\bn}|^2,
\ee
where $P(\bz) = \sum_{\bn \in \Z^d} a_\bn \bz^\bn$.

%%%%%%%%%%%%%%%%%%%%%%%%%%%%%%%%%%

\section{Results}

\subsection{Lyapunov spectrum}
Denote by $|v|_j$ the number of letters $j$ in a word $v$. Then we can write
\be \label{eq1}
(E_\zeta \bz)_k= \exp\Bigl(-2\pi i \sum_{j=0}^{d-1} {|\zeta(k)|}_j \cdot \xi_j\Bigr) = \bz^{\zeta(k)}.
\ee

The following is essentially a variant of  \cite[Corollary and Lemma 3.1]{RS2}, which deal with the twisted cocycle for IET's.
See also \cite[Appendix]{RS2}, devoted to applications to substitution dynamics.
 The current formulation arose in discussions with Pedram Safaee.

\begin{proposition}[H. Rajabzade and P. Safaee] \label{prop-RS} \hspace{-10mm}.
\begin{enumerate}[label=\upshape(\roman*), leftmargin=*, widest=iii]
\item The cocycle $\Mc_\zeta$ has an invariant section: in fact, the vector-function 
$\Rk(\bz):= \left[ \begin{array}{c} 1-z_0 \\ \vdots \\ 1-z_{d-1}\end{array} \right]$ has the 
property $\Mc_\zeta(\bz) \Rk(\bz) = \Rk(E_\zeta \bz)$, hence \\[1.1ex] $\Mc_\zeta(\bz,n) \Rk(\bz) = \Rk(E_\zeta^n\bz)$ for $n\ge 1$.\label{it:1}
\smallskip
\item Under the assumption that $E_\zeta$ is ergodic, so that the Lyapunov exponents are constant a.e., one of the Lyapunov exponents of $\Mc_\zeta$ is zero.
\label{it:2}
\end{enumerate}
\end{proposition}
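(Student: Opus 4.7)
My plan is to prove \ref{it:1} by a direct telescoping computation and \ref{it:2} by combining the invariant-section identity with a Borel--Cantelli argument and the Oseledets theorem.

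For \ref{it:1}, I would unpack \eqref{poly-return} and \eqref{def-matrix0}. Writing $\zeta(b) = u_1^{(b)} \cdots u_{|\zeta(b)|}^{(b)}$, the $b$-th component of $\Mc_\zeta(\bz)\Rk(\bz)$ equals
$$
\sum_{c \in \A} P_{\zeta(b),c}(\bz)(1 - z_c) = \sum_{j=1}^{|\zeta(b)|} \bz^{u_1^{(b)} \cdots u_{j-1}^{(b)}} \bigl(1 - z_{u_j^{(b)}}\bigr),
$$
where I have interchanged summations and used that each index $j$ contributes exactly once to the $c = u_j^{(b)}$ term. This is a telescoping sum collapsing to $1 - \bz^{\zeta(b)}$, which by \eqref{eq1} equals $1 - (E_\zeta \bz)_b$, i.e.\ the $b$-th coordinate of $\Rk(E_\zeta \bz)$. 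The iterated identity $\Mc_\zeta(\bz, n)\Rk(\bz) = \Rk(E_\zeta^n \bz)$ then follows by induction using \eqref{most}.

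For \ref{it:2}, from \ref{it:1} we have $\|\Mc_\zeta(\bz, n) \Rk(\bz)\| = \|\Rk(E_\zeta^n \bz)\|$ pointwise. Since $\|\Rk(\bz)\| \le 2\sqrt{d}$ uniformly, we get $\limsup_n \tfrac{1}{n} \log \|\Mc_\zeta(\bz, n) \Rk(\bz)\| \le 0$ for every $\bz$. For the matching lower bound, note that $|1 - z_j| = 2|\sin(\pi \xi_j)|$ is small only when $\xi_j$ is close to $0 \pmod 1$, whence $m_d(\{\bz : \|\Rk(\bz)\| < \eps\}) \le C \eps^d$ for small $\eps$. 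Since $E_\zeta$ preserves $m_d$, for every fixed $\alpha > 0$,
$$
\sum_{n\ge 1} m_d\bigl(\{\bz : \|\Rk(E_\zeta^n \bz)\| < e^{-\alpha n}\}\bigr) \le \sum_{n\ge 1} C e^{-\alpha d n} < \infty.
$$
By Borel--Cantelli, for $m_d$-a.e.\ $\bz$ one has $\|\Rk(E_\zeta^n \bz)\| \ge e^{-\alpha n}$ for all sufficiently large $n$. As $\alpha > 0$ is arbitrary, $\liminf_n \tfrac{1}{n} \log \|\Rk(E_\zeta^n \bz)\| \ge 0$ a.e., so the limit is $0$.

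Finally, since $\Rk(\bz) \ne 0$ outside a set of measure zero, the Oseledets theorem (whose applicability is ensured by the ergodicity of $E_\zeta$ and by Lemma \ref{lem:lower} giving $\log\|\Mc_\zeta\| \in L^1$) implies that $\Rk(\bz)$ lies in some Oseledets subspace at $\bz$, and the limit computed above coincides with the Lyapunov exponent attached to that subspace. Hence $0$ belongs to the Lyapunov spectrum. The only real obstacle is the lower bound in the second paragraph: one must quantify how close orbits of $E_\zeta$ can come to the singular point $(1,\ldots,1)$, and the volume estimate $m_d(\|\Rk\| < \eps) \lesssim \eps^d$ combined with measure invariance is exactly what makes Borel--Cantelli rule out exponential decay.
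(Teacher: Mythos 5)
Your proposal is correct and follows essentially the same route as the paper: part (i) is the same prefix/telescoping computation, and part (ii) fills in exactly the Borel--Cantelli argument (via the volume bound near the singularity of $\log\|\Rk\|$ at $(1,\ldots,1)$ and $m_d$-invariance of $E_\zeta$) that the paper invokes in one line, followed by the same appeal to Oseledets. No issues.
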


\begin{proof}
We provide a quick proof for the reader's convenience.

(i) Denote by $\Pref_k(v)$ the prefix of length $k$ of a word $v$, so that $\Pref_0(v)$ is the empty word and
$\Pref_{|v|}(v)$ is $v$ itself. Then we can write for $i\le d$:
\begin{eqnarray*}
\bigl(\Mc_\zeta(\bz) \Rk(\bz)\bigr)_i & = & \sum_{j=0}^{d-1} \ \sum_{k:\, \zeta(i)_k = j} \bz^{\Pref_{k-1}(\zeta(i))}(1-z_j) \\
& = & \sum_{k=1}^{|\zeta(i)|} \bz^{\Pref_{k-1}(\zeta(i))}(1-z_{\zeta(i)_k}) \\
& = & \sum_{k=1}^{|\zeta(i)|} \bigl(\bz^{\Pref_{k-1}(\zeta(i))} - \bz^{\Pref_{k}(\zeta(i))}\bigr) \\
& = & 1 - \bz^{\zeta(i)} \\
& = & \Rk(E_\zeta \bz)_i,
\end{eqnarray*}
in view of \eqref{eq1}, as claimed.

(ii) It is easy to see by a Borel-Cantelli argument that for $m_d$-a.e.\ $\bz\in \T^d$,
$$
\chi_{\zeta,\bz,\Rk(\bz)} = \lim_{n\to \infty} n^{-1} \log \|\Mc_\zeta(\bz,n) \Rk(\bz)\| = \lim_{n\to \infty} n^{-1} \log \| \Rk(E^n_\zeta\bz)\| = 0,
$$
since the function $\bz \mapsto \log\|\Rk(\bz)\|$ is continuous on the torus, except for a single logarithmic singularity at $\bz = (1,\ldots,1)$.
If $E_\zeta$ is ergodic, we obtain that there is a zero exponent in the direction of $\Rk(\bz)$ for $m_d$-a.e.\ $\bz$.
\end{proof}

\begin{remark}
1. The proposition extends to the $S$-adic case, with appropriate modifications.

2. If $\zeta$ is a constant length substitution of length $q\ge 2$, 
it is natural to restrict the cocycle to the diagonal, which is invariant under $E_\zeta$. Then the base dynamics is simply $\xi \mapsto q\xi$ (mod 1) on 
$\T^1 \cong \R/\Z$ and $\Rk(\xi)$ is an eigenvector of $\Mc_\zeta(\xi)$ for $\xi \ne 0$, parallel to $(1,\ldots,1)^{\sf T}$.
 In this case the existence of a zero Lyapunov exponent was shown in 
\cite[Theorem 4.2]{BCM20} for $d=2$ and in \cite[Prop.\ 3.3]{Marshall24a} for any $d\ge 2$. 
\end{remark}

As a corollary, we  obtain the following generalization of \cite[Theorem 1.1]{BCM20} from the case of constant-length substitutions on 2 symbols to 
the 2-symbol non-constant length case.

\begin{corollary} \label{cor:Lyap}
For any primitive substitution $\zeta$ on 2 symbols, with $\det(\Sf_\zeta)\ne 0$ and no eigenvalues $\pm 1$, the Lyapunov exponents are given by
$$
\chi_{_{\scriptstyle \zeta,\max}} = \mfr(p_\zeta),\ \ \ \chi_{_{\scriptstyle \zeta,\min}} = 0,
$$
where $p_\zeta(\bz) = \det\Mc_\zeta(\bz)$.
\end{corollary}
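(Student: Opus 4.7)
The plan is to combine Proposition~\ref{prop-RS}(ii), which already supplies one zero Lyapunov exponent, with the standard fact that for a $2\times 2$ cocycle the sum of Lyapunov exponents equals the (scalar) Lyapunov exponent of the determinant cocycle; the latter is then computable via Birkhoff's ergodic theorem.

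First, I would verify that Oseledets' theorem applies by checking ergodicity of $E_\zeta$. For a primitive substitution on two symbols, the Perron--Frobenius eigenvalue $\theta$ of $\Sf_\zeta$ is real and greater than $1$, and since $\mathrm{tr}(\Sf_\zeta), \det(\Sf_\zeta)\in\Z$, the second eigenvalue $\lambda=\det(\Sf_\zeta)/\theta$ is real as well. Hence the only roots of unity that can occur as eigenvalues of $\Sf_\zeta$ are $\pm 1$, which are excluded by hypothesis, so by \cite[Cor.\,2.20]{EWergbook} the endomorphism $E_\zeta$ is ergodic with respect to $m_2$.

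Next, the cocycle identity \eqref{cocycle3} together with the multiplicativity of the determinant gives
\[
\det\Mc_\zeta(\bz,n)=\prod_{k=0}^{n-1} p_\zeta(E_\zeta^k\bz),
\]
and Oseledets' theorem (in dimension two, applied to singular values) yields
\[
\chi_{_{\scriptstyle \zeta,\max}}+\chi_{_{\scriptstyle \zeta,\min}}=\lim_{n\to\infty}\frac{1}{n}\log|\det\Mc_\zeta(\bz,n)|=\lim_{n\to\infty}\frac{1}{n}\sum_{k=0}^{n-1}\log|p_\zeta(E_\zeta^k\bz)|
\]
for $m_2$-a.e. $\bz$. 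The polynomial $p_\zeta$ has integer coefficients (as the determinant of a matrix with integer-coefficient trigonometric polynomial entries), hence $\log|p_\zeta|\in L^1(\T^2,m_2)$ with integral $\mfr(p_\zeta)$. Ergodicity of $E_\zeta$ and Birkhoff's theorem then give
\[
\chi_{_{\scriptstyle \zeta,\max}}+\chi_{_{\scriptstyle \zeta,\min}}=\mfr(p_\zeta).
\]

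Finally, I would identify the zero exponent from Proposition~\ref{prop-RS}(ii) with $\chi_{_{\scriptstyle \zeta,\min}}$. Lemma~\ref{lem:lower} gives $\chi_{_{\scriptstyle \zeta,\max}}\ge 0$, and since $p_\zeta$ is an integer polynomial, $\mfr(p_\zeta)\ge 0$; combined with the displayed identity this forces $\chi_{_{\scriptstyle \zeta,\min}}\ge -\chi_{_{\scriptstyle \zeta,\max}}$. Together with the ordering $\chi_{_{\scriptstyle \zeta,\min}}\le\chi_{_{\scriptstyle \zeta,\max}}$ and the existence of a zero exponent, a short case analysis yields $\chi_{_{\scriptstyle \zeta,\min}}=0$ and $\chi_{_{\scriptstyle \zeta,\max}}=\mfr(p_\zeta)$ (in the degenerate case $\chi_{_{\scriptstyle \zeta,\max}}=0$ one gets $\mfr(p_\zeta)=0$ as well, consistent with the claim). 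The main technical points are verifying ergodicity of $E_\zeta$ and resolving the sign ambiguity as to which exponent is zero; neither presents a serious obstacle.
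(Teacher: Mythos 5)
Your proposal is correct and follows essentially the same route as the paper, which deduces the result in one line from the fact that the sum of the Lyapunov exponents equals the integral of $\log|\det\Mc_\zeta|$ (citing \cite[Theorem 3.5.10]{BP}) combined with the zero exponent from Proposition~\ref{prop-RS}. You merely make explicit some steps the paper leaves implicit -- the ergodicity check for $E_\zeta$, the Birkhoff argument for the determinant cocycle, and the observation that $\mfr(p_\zeta)\ge 0$ forces the zero exponent to be the minimal one -- all of which are sound.
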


\begin{proof}
This is immediate from the (one-sided) Oseledets Theorem, which says, in particular, that the sum of the Lyapunov exponents of a matrix cocycle equals the 
integral of the logarithm of the modulus of the determinant, see e.g. \cite[Theorem 3.5.10]{BP}.
\end{proof}

\subsection{Application: singularity of the spectrum} 
Recall the following:

\begin{theorem}[{\cite[Theorems 2.4, 2.5]{BuSo22}}] \label{th:BuSo22}
\begin{enumerate}[label=\upshape(\roman*), leftmargin=*, widest=iii]
\item Let $\zeta$ be a primitive substitution on $d\ge 2$ symbols, such that $\Sf_\zeta$ is irreducible over
$\Q$. Let $\theta$ be the Perron-Frobenius eigenvalue of $\Sf_\zeta$. If
$$
\chi_{_{\scriptstyle \zeta,\max}}< \half \log \theta,
$$
then the substitution $\Z$-action has pure singular spectrum.
\label{it:1}
\item The same conclusion holds for reducible substitutions on 2 symbols, whose matrix $\Sf_\zeta$ has two integer eigenvalues $\theta=\theta_1 
>|\theta_2|>1$.\label{it:2}
\end{enumerate}
\end{theorem}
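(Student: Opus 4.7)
The plan is to use the matrix Riesz product representation of spectral measures for substitution $\Z$-actions to translate the hypothesis $\chi_{_{\scriptstyle \zeta,\max}} < \tfrac12\log\theta$ into a Lebesgue-a.e.\ vanishing of the Radon-Nikodym derivative $d\sigma_f/d\Leb$ for $f$ in a dense class of test functions. To establish pure singular spectrum it suffices to check $\sigma_f \perp \Leb$ for a dense family of (mean-zero) test functions, so I would take $f=\sum_a c_a\bigl(\One_{[a]}-\mu[a]\bigr)$ and track the coefficient vector $\vec{c}=(c_0,\ldots,c_{d-1})^{\sf T}$ through the renormalization.

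The substantive step is the construction of the generalized matrix Riesz product of \cite{BuSo14,BuSo18,BuSo20}. Iterating the substitution and summing ergodic averages over the $n$-th level super-words $\zeta^n(a)$, the corresponding trigonometric polynomials factor through the twisted cocycle via the identity \eqref{coc2}, yielding a Fej\'er-type bound of the form
\[
\int \phi_{n,\xi}(\eta)\, d\sigma_f(\eta) \ \le\ C_f\,\theta^{-n}\,\bigl\|\Cc_\zeta(\xi,n)\vec{c}\bigr\|^2,
\]
where $\theta^n$ is the Perron-Frobenius normalization of the total mass and $\phi_{n,\xi}$ is an approximate identity concentrated near $\xi$. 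Letting $n\to\infty$ along the substitution hierarchy upgrades this to a pointwise upper bound for $d\sigma_f/d\Leb$ controlled by the asymptotic growth of $\|\Cc_\zeta(\xi,n)\|$.

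The Lyapunov hypothesis then closes the argument. By the Oseledets theorem applied to the ergodic endomorphism $E_\zeta$, for $m_d$-a.e.\ $\xi\in\T^d$ one has $n^{-1}\log\|\Cc_\zeta(\xi,n)\|\to \chi_{_{\scriptstyle \zeta,\max}}$, so under the hypothesis $\chi_{_{\scriptstyle \zeta,\max}}<\tfrac12\log\theta$ the ratio $\theta^{-n}\|\Cc_\zeta(\xi,n)\|^2$ tends to $0$ for $m_d$-a.e.\ $\xi$. In case \ref{it:1}, irreducibility of $\Sf_\zeta$ over $\Q$ guarantees that the $E_\zeta$-orbits project ``generically'' onto the one-dimensional circle on which $\sigma_f$ actually lives, so the $m_d$-a.e.\ decay transfers to a $\Leb$-a.e.\ decay on $\T^1$; combined with the density bound this gives $d\sigma_f/d\Leb=0$ a.e., i.e.\ $\sigma_f\perp\Leb$. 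In case \ref{it:2}, when the $2\times 2$ matrix $\Sf_\zeta^{\sf T}$ has two integer eigenvalues $\theta_1>|\theta_2|>1$, both rational eigenlines carry an ergodic endomorphism (multiplication by $\theta_i$), and the estimate is applied componentwise after decomposing the cocycle along the invariant $\Sf_\zeta^{\sf T}$-splitting.

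The hardest step is the density bound itself: one must make the implicit constants and truncation in the Riesz-product estimate precise, and then transfer the Lyapunov bound from the $d$-torus down to the $1$-torus where the spectral measure lives. The first task amounts to identifying the return-word trigonometric polynomials \eqref{poly-return} with rows of $\Mc_\zeta(\bz)$ and controlling the ``tail'' coming from the non-rigid part of the substitution hierarchy; the second rests on the algebraic structure of $\Sf_\zeta^{\sf T}$---irreducibility over $\Q$ in (i), and an explicit integer diagonalization in (ii). These two ingredients form the technical core of \cite[Theorems 2.4, 2.5]{BuSo22}.
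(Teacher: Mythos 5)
The paper does not prove this theorem; it is quoted verbatim from \cite{BuSo22} (Theorems 2.4 and 2.5 there), so your proposal is being measured against the argument of that reference rather than anything in the present text. Your overall architecture does match the cited proof: spectral measures of cylindrical functions are controlled by the generalized matrix Riesz product, one obtains an estimate of the shape $\sigma_f\bigl(B(\omega,\theta^{-n})\bigr)\lesssim \theta^{-n}\,\|\Cc_\zeta(\vec\omega,n)\vec c\,\|^2$, and the hypothesis $\chi_{_{\scriptstyle\zeta,\max}}<\tfrac12\log\theta$ forces the lower local dimension of $\sigma_f$ to exceed $1$, hence $d\sigma_f/d\Leb=0$ Lebesgue-a.e.

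The genuine gap is in your transfer step. The Oseledets/Furstenberg--Kesten statement gives $n^{-1}\log\|\Cc_\zeta(\xi,n)\|\to\chi_{_{\scriptstyle\zeta,\max}}$ only for $m_d$-a.e.\ $\xi\in\T^d$, whereas the spectral measure of the $\Z$-action lives on the circle embedded as $\omega\mapsto\omega(1,\ldots,1)\ (\mathrm{mod}\ \Z^d)$, which is an $m_d$-null set. Your assertion that ``irreducibility guarantees that the $E_\zeta$-orbits project generically onto the circle, so the $m_d$-a.e.\ decay transfers to $\Leb$-a.e.\ decay'' is not an argument: an $m_d$-full set can entirely miss a fixed line. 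The actual content of \cite[Theorems 2.4, 2.5]{BuSo22} at this point is an equidistribution theorem for the pushforwards of Lebesgue measure on the diagonal line under the iterates $(\Sf_\zeta^{\sf T})^n$, which is where irreducibility over $\Q$ (resp.\ the two integer eigenvalues of modulus $>1$ in case (ii)) is genuinely used; this is also precisely the ingredient generalized by Yaari \cite{Yaari} to the reducible case, where one must first restrict the cocycle to the minimal rational $\Sf_\zeta^{\sf T}$-invariant subspace containing $(1,\ldots,1)$. Moreover, even after equidistribution one only controls $\limsup n^{-1}\log\|\Cc_\zeta(\omega\vec 1,n)\vec c\,\|$ by the integrals $\frac1n\int\log\|\Cc_\zeta(\cdot,n)\|\,dm_d$, not by a literal Oseledets limit along the line, so the estimate must be run through the subadditive averages of \eqref{Lyap12} rather than through a pointwise Lyapunov exponent at $\omega\vec 1$. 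Your case (ii) sketch (``decompose the cocycle along the invariant splitting and argue componentwise'') likewise names the right objects but does not supply the uniform-distribution input that makes the reduction work. In short: the skeleton is correct, but the step you yourself identify as hardest is exactly the one left unproved, and the justification offered for it is invalid as stated.
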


Note that irreducibility of the matrix $\Sf_\zeta$ over the rationals implies that the endomorphism $E_\zeta$ is ergodic (in any dimension).
Indeed, the criterion for ergodicity
of a toral endomorphism is absence of roots unity in the spectrum of the defining integer matrix, see e.g., \cite[Corollary 2.2]{EWergbook}.
Since the Perron-Frobenius eigenvalue $\Sf_\zeta$ is not a root of unity, non-ergodicity of $E_\zeta$ implies that the characteristic polynomial of $\Sf_\zeta$
is reducible over  $\Q$.

Theorem~\ref{th:BuSo22} was extended by Yaari \cite{Yaari} to the case when $\Sf_\zeta$ is reducible and also to the case of suspension flows under certain non-degeneracy conditions. In particular, for the statement on substitution $\Z$-actions, one needs to restrict the cocycle to the subtorus corresponding to the minimal
$\Sf_\zeta^{\sf T}$-invariant subspace over $\Q$, containing the vector $(1,\ldots,1)$.
For the closely related notion of diffraction spectrum, for the self-similar $\R$-action, analogous results were obtained earlier by Baake and
collaborators in \cite{BFGR19,BGM18,BGM19} by a different method.

Combining these results with Corollary~\ref{cor:Lyap} allows us to greatly expand the list of examples of non-constant length substitutions on 2 symbols, for which the spectrum is purely singular, avoiding lengthy numerical computations, as those used in \cite{BFGR19,BGM18}. 
In particular, it immediately follows that for the  family of substitutions $1\to 0\to 01^m$ studied in \cite{BGM18}, the Lyapunov spectrum is degenerate:
$\chi_{_{\scriptstyle \zeta,\max}} = \chi_{_{\scriptstyle \zeta,\min}} = 0$.

We think that it is very
unlikely that there exists a substitution system on 2 symbols with a Lebesgue spectral component, but this remains an open question. 

The next proposition contains a ``sample'' 
list of sufficient conditions for singularity, which is by no means exhaustive. 

\begin{proposition} \label{prop:sing}
Suppose that $\zeta$ is a substitution on 2 symbols, with the PF eigenvalue $\theta$, and $\Sf_\zeta^{\sf T}$ is irreducible over $\Q$.
Any of the following conditions implies that the substitution $\Z$-action has pure singular spectrum.
\begin{enumerate}[label=\upshape(\roman*), leftmargin=*, widest=iii]
\item \cite[Proposition 5.1]{RS2} Suppose that 
$\Sf_\zeta^{\sf T} = \left(\begin{array}{cc} A & B \\ C & D \end{array} \right)$, with $A, B, C, D >0$, such that
$\theta > 2 \min\{A+C, B+D\}$; \label{it:1} \smallskip
\item $\Sf_\zeta^{\sf T} = \left(\begin{array}{cc} A & B \\ C & 0 \end{array} \right)$,
with $A, B, C>0$, and $B < A+C$;\label{it:2} \smallskip
\item $\zeta(0) = 0^A 1^B,\ \zeta(1) = 1^C 0^D$, with $A, B, C, D >0$, such that $\theta> 6$.\label{it:3}
\end{enumerate}
\end{proposition}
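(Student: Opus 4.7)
My plan is to reduce all three sufficient conditions to the single inequality
$$\mfr(p_\zeta) \;<\; \tfrac{1}{2}\log\theta,\qquad p_\zeta:=\det\Mc_\zeta,$$
after which Theorem~\ref{th:BuSo22}(i) delivers singular spectrum. Irreducibility of $\Sf_\zeta^{\sf T}$ over $\Q$ implies ergodicity of $E_\zeta$, and (for $2\times 2$ integer matrices) automatically forces $\det\Sf_\zeta\neq 0$ and the absence of eigenvalues $\pm 1$, so Corollary~\ref{cor:Lyap} applies and gives $\chi_{_{\scriptstyle\zeta,\max}}=\mfr(p_\zeta)$. My unifying tool for bounding $\mfr(p_\zeta)$ will be the $L^2$ inequality \eqref{ineq:Mahler}, combined with the multiplicativity $\mfr(FG)=\mfr(F)+\mfr(G)$ and the vanishing of $\mfr$ on (extended) cyclotomic factors guaranteed by Theorem~\ref{th:Kron}.

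For case (i), the estimate $\mfr(p_\zeta)<\tfrac{1}{2}\log\theta$ under the hypothesis $\theta>2\min\{A+C,B+D\}$ is exactly the content of \cite[Proposition~5.1]{RS2}, which I would simply cite. At the level of ideas, one expands the determinant, invokes the pointwise bound $|P_{\zeta(b),c}(\bz)|\le|\zeta(b)|_c$ together with the Parseval identity $\|P_{\zeta(b),c}\|_2^2=|\zeta(b)|_c$, and pays a factor of~$2$ (whence the hypothesis) for combining the two product terms in the $2\times 2$ determinant via the triangle inequality.

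For case (ii), the hypothesis $|\zeta(1)|_1=0$ forces $\zeta(1)=0^C$, so $P_{\zeta(1),1}\equiv 0$ and the determinant collapses to a single product
$$p_\zeta(\bz) \;=\; -\,P_{\zeta(0),1}(\bz)\,P_{\zeta(1),0}(\bz),\qquad P_{\zeta(1),0}(\bz)=1+z_0+\cdots+z_0^{C-1}.$$
The factor $P_{\zeta(1),0}$ is a product of cyclotomic polynomials in $z_0$, hence has Mahler measure $1$; the factor $P_{\zeta(0),1}$ is a $\{0,1\}$-polynomial with $B$ nonzero terms, so \eqref{ineq:Mahler} gives $\mfr(p_\zeta)\le\tfrac{1}{2}\log B$. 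A short calculation with the characteristic polynomial $\lambda^2-A\lambda-BC$ shows that $B<A+C$ is equivalent to $B<\theta$, finishing the case.

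For case (iii), the explicit form of $\zeta$ lets me recognize each entry of $\Mc_\zeta$ as a truncated geometric series (times a monomial). Multiplying $p_\zeta(\bz)$ by $(z_0-1)(z_1-1)$ and telescoping via $1+z+\cdots+z^{N-1}=(z^N-1)/(z-1)$, the result collapses to a $\pm 1$-polynomial with exactly $6$ distinct monomials. Since $(z_0-1)(z_1-1)$ is cyclotomic and so contributes nothing to the logarithmic Mahler measure, \eqref{ineq:Mahler} yields $\mfr(p_\zeta)\le\tfrac{1}{2}\log 6$, which is strictly less than $\tfrac{1}{2}\log\theta$ precisely under the hypothesis $\theta>6$.

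The only genuine difficulty lies in case (i): neither the invariant section nor the cyclotomic-factor trick is directly available, the $P_{\zeta(b),c}$ are arbitrary $\{0,1\}$-polynomials, and $P_{\zeta(0),0}P_{\zeta(1),1}-P_{\zeta(0),1}P_{\zeta(1),0}$ admits both substantial cancellation and reinforcement, so bounding its $L^2$ norm tightly enough to beat $\theta$ is the main technical point; this is exactly what \cite[Proposition~5.1]{RS2} accomplishes.
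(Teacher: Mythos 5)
Your treatment of cases (ii) and (iii) is correct and coincides with the paper's argument: in (ii) the determinant collapses to $-\Psi_C(z_0)P_{\zeta(0),1}(\bz)$, the cyclotomic factor is discarded, the $L^2$ bound \eqref{ineq:Mahler} gives $\mfr(p_\zeta)\le\frac12\log B$, and the equivalence of $B<A+C$ with $B<\theta$ via $\lambda^2-A\lambda-BC$ is right; in (iii) multiplying by $(1-z_0)(1-z_1)$ telescopes $p_\zeta$ to a $\{-1,0,1\}$-polynomial with exactly six monomials, exactly as in the paper. Your preliminary reductions (irreducibility over $\Q$ excludes the eigenvalues $0,\pm1$, so Corollary~\ref{cor:Lyap} and Theorem~\ref{th:BuSo22} apply) are also fine.

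The weak point is case (i). Citing \cite[Proposition 5.1]{RS2} is defensible, since the statement itself attributes (i) there; but the paper reproduces a short self-contained proof, and your description of the underlying mechanism is wrong in a way worth correcting. You assert that ``neither the invariant section nor the cyclotomic-factor trick is directly available'' for (i); in fact the invariant section is precisely the tool. Lemma~\ref{lem:RS} replaces one column of $\Mc_\zeta(\bz)$ by $\Mc_\zeta(\bz)\Rk(\bz)=\Rk(E_\zeta\bz)$ and divides out $\det(\bbe_1,\Rk(\bz))=1-z_1$, which has Mahler measure zero, yielding
$\mfr(p_\zeta)=\mfr\bigl(P_{\zeta(01),0}-P_{\zeta(10),0}\bigr)=\mfr\bigl(P_{\zeta(01),1}-P_{\zeta(10),1}\bigr)$.
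These differences are $\{-1,0,1\}$-polynomials with at most $2(A+C)$ (respectively $2(B+D)$) monomials, so \eqref{ineq:Mahler} gives $\mfr(p_\zeta)\le\frac12\log\bigl(2\min\{A+C,B+D\}\bigr)<\frac12\log\theta$. By contrast, the route you sketch --- Parseval for each entry, the pointwise bound $|P_{\zeta(b),c}|\le|\zeta(b)|_c$, and a triangle inequality on the two products in the $2\times2$ determinant --- yields at best $\int_{\T^2}|p_\zeta|^2\le 2\bigl(AD\min\{A,D\}+BC\min\{B,C\}\bigr)$, which does not produce the threshold $2\min\{A+C,B+D\}$ in the hypothesis. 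So if you had to prove (i) rather than cite it, your sketch would not close; the missing idea is exactly the column substitution coming from the invariant section.
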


\begin{remark} 
If $\zeta$ is a Pisot substitution, that is, the second eigenvalue of $\Sf_\zeta$ lies inside the unit circle, then the substitution $\Z$-action has pure discrete,
hence singular, spectrum, see \cite{BD,HS}. 
For matrices as in part (ii), the Pisot condition is equivalent to  $A \ge BC$.
\end{remark}

 The proof of Proposition~\ref{prop:sing}(i) appeared in \cite[Appendix]{RS2}. 
We repeat it here in our notation, since it will be useful later.

\begin{lemma}[{See the proof of \cite[Prop.\,5.1]{RS2}}] \label{lem:RS}
Suppose that $\zeta$ is a primitive substitution of $\Ak=\{0,1\}$.
Then
\begin{eqnarray*}
\mfr\bigl(\det\Mc_\zeta(z_0,z_1)\bigr) & = & \mfr\bigl(P_{\zeta(01),0}(z_0,z_1) - P_{\zeta{(10)},0}(z_0,z_1)\bigr) \\ & = & 
\mfr\bigl(P_{\zeta(01),1}(z_0,z_1) - P_{\zeta{(10)},1}(z_0,z_1)\bigr).
\end{eqnarray*}
\end{lemma}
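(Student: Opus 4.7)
The plan is to derive the algebraic identity
$$
P_{\zeta(01),a}(\bz) - P_{\zeta(10),a}(\bz) \;=\; (-1)^a(1-z_{1-a})\,\det\Mc_\zeta(\bz), \quad a\in\{0,1\},
$$
and then apply multiplicativity of the logarithmic Mahler measure together with the vanishing $\mfr(1-z_j)=0$ (a one-variable cyclotomic factor, cf.\ Theorem~\ref{th:Kron}). Both claimed equalities then follow at once, since taking $\mfr$ of each side gives $\mfr(P_{\zeta(01),a} - P_{\zeta(10),a}) = \mfr(\det \Mc_\zeta)$, independent of $a$.

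The key input is the elementary concatenation formula $P_{uv,a}(\bz) = P_{u,a}(\bz) + \bz^u P_{v,a}(\bz)$, immediate from \eqref{poly-return}: occurrences of $a$ in $uv$ split into those inside $u$ and those inside $v$, the latter carrying the extra prefix factor $\bz^u$. Applying this to $\zeta(01) = \zeta(0)\zeta(1)$ and $\zeta(10) = \zeta(1)\zeta(0)$ and subtracting, I get
$$
P_{\zeta(01),a} - P_{\zeta(10),a} = P_{\zeta(0),a}\bigl(1 - \bz^{\zeta(1)}\bigr) - P_{\zeta(1),a}\bigl(1 - \bz^{\zeta(0)}\bigr).
$$
At this point I would substitute the invariant-section identity from Proposition~\ref{prop-RS}(i), which in coordinates reads $1 - \bz^{\zeta(i)} = P_{\zeta(i),0}(\bz)(1-z_0) + P_{\zeta(i),1}(\bz)(1-z_1)$ for $i\in\{0,1\}$. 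A short expansion shows that the two contributions proportional to $1-z_a$ cancel, while the coefficient of $1-z_{1-a}$ collapses to $\pm\bigl(P_{\zeta(0),0}P_{\zeta(1),1} - P_{\zeta(0),1}P_{\zeta(1),0}\bigr) = \pm\det\Mc_\zeta(\bz)$, producing the displayed identity (with sign $(-1)^a$).

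The main, and really the only, substantive step is the cancellation collapsing the expression into a $2\times 2$ determinant; it is essentially a Cramer-type identity disguised by the combinatorics of the polynomials $P_{u,a}$, but crucially exploits the specific form of the invariant section. Everything else is bookkeeping: $\mfr$ is additive on products, and $\mfr(1-z_j) = 0$, so the two quantities on the right-hand side of the lemma both reduce to $\mfr(\det \Mc_\zeta(\bz))$, as claimed.
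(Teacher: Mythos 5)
Your proposal is correct and follows essentially the same route as the paper: both establish the identity $(1-z_{1-a})\det\Mc_\zeta(\bz)=\pm\bigl(P_{\zeta(01),a}-P_{\zeta(10),a}\bigr)$ using the invariant section $\Mc_\zeta(\bz)\Rk(\bz)=\Rk(E_\zeta\bz)$ together with the concatenation formula $P_{uv,a}=P_{u,a}+\bz^u P_{v,a}$, and then conclude via multiplicativity of $\mfr$ and $\mfr(1-z_j)=0$. The only difference is presentational --- the paper packages the computation as a Cramer-type quotient of determinants with the column matrix $(\bbe_a,\Rk(\bz))$, whereas you expand the difference directly --- but the content and all key inputs are identical.
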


\begin{proof}
Suppose that $\Sf_\zeta^{\sf T} = \left(\begin{array}{cc} A & B \\ C & D \end{array} \right)$, and 
let $\bbe_1=(1,0)^{\sf T}$. We have, in view of Proposition~\ref{prop-RS},
\begin{eqnarray*}
\det\Mc_\zeta(z_0,z_1) & = & 
%\det \left(\begin{array}{cc} P_{\zeta(0),0}(z_0,z_1) & P_{\zeta(0),1}(z_0,z_1) \\  P_{\zeta(1),0}(z_0,z_1) & P_{\zeta(1),1}(z_0,z_1)
%\end{array}\right) \\[1.2ex]
                              \frac{\det\bigl(\Mc_\zeta(z_0,z_1)\bbe_1, \Mc_\zeta(z_0,z_1) \Rk(z_0,z_1)\bigr)}{\det(\bbe_1,\Rk(z_0,z_1))} \\[1.2ex]
                              & = & \frac{\det \left(\begin{array}{cc} P_{\zeta(0),0}(z_0,z_1) & 1 - z_0^A z_1^B \\ P_{\zeta(1),0}(z_0,z_1) & 1 - z_0^C z_1^D\end{array}
                              \right)}{1-z_1}\,.
\end{eqnarray*}
Since the Mahler measure of $1-z_1$ is zero, we obtain that
$\mfr\bigl(\det\Mc_\zeta(z_0,z_1)\bigr) = \mfr\bigl(Q_1(z_0,z_1))$,
where
\begin{eqnarray}
Q_1(z_0,z_1) &  = & P_{\zeta(0),0}(z_0,z_1) + z_0^Az_1^B P_{\zeta(1),0}(z_0,z_1) \nonumber \\ & & - P_{\zeta(1),0}(z_0,z_1) - z_0^C z_1^D P_{\zeta(0),0}(z_0,z_1) 
\nonumber\\
& = & P_{\zeta(01),0}(z_0,z_1) - P_{\zeta{(10)},0}(z_0,z_1). \label{def:Q1}
\end{eqnarray}
The second equality follows in the same way, using $\bbe_2$.
\end{proof}

\begin{proof}[Proof of Proposition~\ref{prop:sing}]
(i) We are going to use the inequality \eqref{ineq:Mahler} for the polynomial $Q_1$ in \eqref{def:Q1}. Observe that this polynomial has coefficients
$-1,0,1$, hence it suffices to estimate the number of monomials in $Q_1$, which is at most $2(A+C)$. Thus, 
$\mfr(p_\zeta) \le  \half \log(2(A+C))$  by \eqref{ineq:Mahler}, and
similarly, $\mfr(p_\zeta) \le  \half\log(2(B+D))$, by the 2nd equality in Lemma~\ref{lem:RS}. Now the claim follows from Theorem~\ref{th:BuSo22}.

(ii)  Observe that
$$
\mfr(p_\zeta(\bz)) = \mfr(P_{U,1}(\bz)),\ \ \mbox{with}\ U=\zeta(0).
$$
But $P_{U,1}(\bz)$ has exactly $B$ monomials with coefficients 1, hence $\mfr(p_\zeta(\bz))\le \half \log B$ by \eqref{ineq:Mahler}.
However, $\theta>B$ when  $A+C>B$, and an application of Theorem~\ref{th:BuSo22} finishes the proof, as above.

(iii)
Now $\zeta(0) = 0^A 1^B,\ \zeta(1) = 1^C 0^D$, with $A, B, C, D >0$.  Denote
$$
\Psi_m(t) := \sum_{j=0}^{m-1} t^j,\ \ \ m\ge 1.
$$
We have
$$
p_\zeta(\bz) = \det\Mc_\zeta(\bz) = \Psi_A(z_0)\Psi_C(z_1) - z_0^A z_1^C \Psi_B(z_1) \Psi_D(z_0).
$$
Then $p_\zeta(\bz)(1-z_0)(1-z_1)$ is a polynomial with 6 monomials, and assuming $\theta >6$, we can conclude as in (i) and (ii).
\end{proof}

%%%%%%%%%%%%%%%%%%%%%%%%%%%%%%%%%%%%%%%%%%%

\section{Positivity of the top exponent}
A related question that has been raised is to determine when the top Lyapunov exponent $\chi_{_{\scriptstyle \zeta,\max}}$ is positive
(respectively zero). Recently it was shown \cite{RS1} that for the twisted cocycle associated with interval exchange transformations, corresponding to a translation surface of genus $g\ge 2$, the top Lyapunov exponent is positive with respect to the natural invariant measure.

In the 2-symbol substitution case,  positivity of the top exponent  is equivalent to $\mfr(p_\zeta)>0$, in view of Corollary~\ref{cor:Lyap}.
For binary (i.e., 2-symbol) substitutions of constant length $q$, when the cocycle is over the times-$q$ map, a variety of results on (non)-degeneracy of the
Lyapunov spectrum were obtained in \cite[Sections 3.2-3.4]{Man-thesis}. Here we focus on non-constant length substitutions.
Theorem~\ref{th:Kron} provides an answer to the question when the Lyapunov spectrum is degenerate; it is not always easy to check in concrete cases.
The following are partial results in this direction.

\begin{lemma} \label{lem:con1}
Suppose that $P(z_0,z_1)$ is a polynomial which has the term of lowest degree equal to 1. Suppose,
moreover, that $\mfr(P) = 0$, so that it is a product of extended cyclotomic polynomials from Definition~\ref{def:cyc}. Then
every polynomial in this factorization has 1 as its lowest term and can be written as $\Phi(z_0^{v_0} z_1^{v_1})$, where
$v_0, v_1\ge 0$, $(v_0,v_1)\ne (0,0)$, hence $b_0 = b_1=0$
in \eqref{eq:cyc}. 
\end{lemma}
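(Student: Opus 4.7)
The plan is to combine Theorem~\ref{th:Kron} with a support (Newton-polytope) analysis at the origin and the classical reciprocal identity for cyclotomic polynomials.

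First, by Theorem~\ref{th:Kron}, write $P(\bz) = \epsilon\, z_0^{c_0} z_1^{c_1} \prod_{j=1}^N \psi_j(\bz)$ with $\epsilon = \pm 1$, $c_0, c_1 \in \Z_{\geq 0}$, and each $\psi_j$ an extended cyclotomic factor as in \eqref{eq:cyc}. Since every $\psi_j$ is a \emph{polynomial}, its support lies in $\Z_{\geq 0}^2$, and so the support of $P$ is contained in the Minkowski sum $(c_0,c_1) + \sum_j \supp(\psi_j) \subset \Z_{\geq 0}^2$. The hypothesis $P(0,0)=1$ therefore forces $(c_0,c_1)=(0,0)$ and $(0,0)\in\supp(\psi_j)$ for every $j$, with $\epsilon\prod_j \psi_j(0,0) = 1$; in particular, the positive integers $|\psi_j(0,0)|$ multiply to $1$.

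Next I do a case analysis on the signs of $v_0^{(j)}, v_1^{(j)}$. Writing $\Phi_j(t) = \sum_{k=0}^{D_j} a_k^{(j)} t^k$ and using $b_i^{(j)} = \max\{0, -v_i^{(j)} D_j\}$, a direct check gives: (i) if $v_0^{(j)}, v_1^{(j)} \geq 0$ (not both zero), then $b_0^{(j)}=b_1^{(j)}=0$ and the constant term of $\psi_j$ is $\Phi_j(0)$, contributed at $k=0$; (ii) if $v_0^{(j)}, v_1^{(j)} \leq 0$ (not both zero), the constant term $a_{D_j}^{(j)}=1$ is contributed at $k = D_j$; (iii) if $v_0^{(j)}, v_1^{(j)}$ have strictly opposite signs, every monomial of $\psi_j$ carries at least one positive exponent, so $(0,0)\notin\supp(\psi_j)$, contradicting the previous paragraph. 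Case (iii) is thus excluded. For case (ii), I apply the reciprocal identity $w^{D_n}\Phi_n(1/w) = \sigma_n\, \Phi_n(w)$, valid for every cyclotomic $\Phi_n$ (with $\sigma_n=1$ for $n\geq 2$ and $\sigma_1=-1$), taking $w = z_0^{-v_0^{(j)}} z_1^{-v_1^{(j)}}$; this rewrites $\psi_j$ as $\pm\,\Phi_j(z_0^{-v_0^{(j)}} z_1^{-v_1^{(j)}})$, which is of the same form as (i) with non-negative exponents, the sign being absorbed into $\epsilon$. This proves that every factor can be written with $v_0^{(j)}, v_1^{(j)} \geq 0$, not both zero, and $b_0^{(j)} = b_1^{(j)} = 0$.

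Finally, each $\psi_j$ now has constant term $\pm \Phi_j(0)$. Since $|\Phi_n(0)| = p$ when $n = p^k$ is a prime power ($k \geq 1$) and $|\Phi_n(0)| = 1$ otherwise, the constraint $\prod_j |\psi_j(0,0)| = 1$ excludes prime-power indices entirely, so each $\psi_j(0,0)\in\{\pm 1\}$. Relocating the residual minus signs (arising from $\Phi_1$ factors or from the reciprocal step) into the global prefactor $\epsilon$ makes the lowest-degree coefficient of each factor equal to $+1$, which is the first conclusion of the lemma.

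The main subtlety will be the sign bookkeeping: because $\Phi_1(t)=t-1$ has constant term $-1$, the clause ``can be written as $\Phi(z_0^{v_0} z_1^{v_1})$'' must be read modulo the overall monomial prefactor from Definition~\ref{def:cyc}. Once that convention is granted, the whole argument is a Newton-polytope computation at the origin combined with the standard reciprocal identity for cyclotomic polynomials.
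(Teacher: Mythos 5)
Your core argument is correct and is essentially the same as the paper's: reduce via Theorem~\ref{th:Kron} to a product of extended cyclotomic factors, observe that the hypothesis on the lowest term forces each factor to have constant term $\pm 1$, and then run a sign case analysis on $(v_0,v_1)$, using the reciprocal identity $w^{\deg\Phi}\Phi(1/w)=\pm\Phi(w)$ to convert negative exponents into positive ones and ruling out strictly mixed signs because then no monomial of $\psi_j$ can be constant. Your Newton-polytope justification of the first step (the support of the product sits in the Minkowski sum of supports inside $\Z_{\ge 0}^2$, so $(0,0)\in\supp(P)$ forces $(0,0)\in\supp(\psi_j)$ for every $j$) is a welcome explicit version of the paper's one-line ``It follows from the assumption''; the paper's treatment of the mixed case ($v_0\ge 0$, $v_1<0$ forces $v_0=0$) is the same computation with the case boundary drawn slightly differently.

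There is, however, a factual error in your final paragraph: you assert $|\Phi_n(0)|=p$ when $n=p^k$ is a prime power and conclude that prime-power cyclotomic factors are excluded. You are confusing $\Phi_n(0)$ with $\Phi_n(1)$. In fact $\Phi_n(0)=1$ for all $n\ge 2$ and $\Phi_1(0)=-1$ (this follows by induction from $\prod_{d\mid n}\Phi_d(0)=-1$), so for instance $\Phi_2(z_1)=1+z_1$ is a perfectly admissible factor --- and indeed factors such as $\Psi_{k_1}(z_1)$ and $\Psi_{k_j}(z_1^{k_1\cdots k_{j-1}}z_0^{N_{j-1}})$ with prime-power $k_j$ occur in the examples following Proposition~\ref{prop-zero0}. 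The error is harmless for the lemma itself, since the conclusion you actually need ($\psi_j(0,0)\in\{\pm 1\}$) holds automatically; but the intermediate claim that prime-power indices are excluded is false and should be deleted. Your closing remark about absorbing the $\Phi_1$ sign into the monomial prefactor is a legitimate point of convention that the paper's statement also glosses over.
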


\begin{proof}
It follows from the assumption
that every extended cyclotomic polynomial, appearing in the product, must have the term of lowest degree equal to $\pm 1$.
Let $\psi(\bz) = z_0^{b_0} z_1^{b_1} \Phi(z_0^{v_0} z_1^{v_1})$ be one of them, with
$\Phi$ a cyclotomic polynomial, $v_i$ integers, and $b_i= \max\{0, - v_i \deg(\Phi)\}$.
Let $m=\deg(\Phi)$. If $v_0<0$ and $v_1<0$, then 
$$
\psi(\bz) = z_0^{-mv_0} z_1^{-mv_1} \Phi(z_0^{v_0} z_1^{v_1}) = \pm \Phi(z_0^{-v_0} z_1^{-v_1}),
$$
since every cyclotomic polynomial is palindromic: $\Phi(z) = \Phi(z^{-1})z^m$, except for $P_1(z) = z-1$, for which the equality holds up to the minus sign.

Now suppose that $v_0 \ge 0, v_1<0$; then  $\psi(\bz) = z_1^{m|v_1|} \Phi(z_0^{v_0} z_1^{-|v_1|})$. In order to have a term $\pm 1$ in $\Phi(\bz)$, 
we must have $v_0=0$. But then, as above, we can write $\psi(\bz) = z_1^{m|v_1|} \Phi(z_1^{-|v_1|}) = \pm \Phi(z_1^{|v_1|})$. The remaining case
follows by symmetry.
\end{proof}

\subsection{A special case}
Suppose first that $\zeta$ is a primitive aperiodic substitution on $\Ak = \{0,1\}$, with
$\Sf_\zeta^{\sf T} = \left(\begin{array}{cc} A & B \\ C & 0 \end{array} \right)$, that is,
$\zeta(0)  = U\in \Ak^*,\ \zeta(1) = 0^C,\  C\ge 1$. 
Recall that
$
\Psi_m(t) := \sum_{j=0}^{m-1} t^j,
$
which, of course, has zero Mahler measure.
We have
\be \label{ek0}
p_\zeta(\bz) = \det\Mc_\zeta(\bz) = -\Psi_C(z_0)P_{U,1}(\bz),
\ee
where $P_{U,1}(\bz)$ was defined in \eqref{poly-return}. Thus it suffices to decide when  $\mfr(P_{U,1}(\bz))=0$. If $U = 0^{n_1}\wt U0^{n_2}$, then
$P_{U,1}(\bz) = z_0^{n_1} P_{\wt U,1}(\bz)$, hence we can assume without loss of generality that $U$ starts and ends with a 1. Then
$$
P_{U,1}(\bz) = 1+\sum_{j=1}^{N-1} z_0^{n_j} z_1^j,
$$
where $N$ is the number of 1's in $U$  and $n_j$ is a non-decreasing sequence. In particular, 
\be \label{factor1}
P_{U,1}(1,z_1) = \Psi_N(z_1).
\ee 

\begin{proposition} \label{prop-zero0}
Suppose that $U \in \{0,1\}^+$ is such that $\mfr(P_{U,1})=0$, and $U$ starts and ends with a 1. 
By Theorem~\ref{th:Kron}, $P_{U,1}$ is a product of extended cyclotomic polynomials. Then the following holds:
\begin{enumerate}[label=\upshape(\roman*), leftmargin=*, widest=iii]
\item If $U$ starts with $1^{k_1}0$, with $k_1\ge 2$, then $\Psi_{k_1}(z_1)$ is one of these factors, and all other factors are cyclotomic polynomials in monomials
with positive powers of both  $z_1$ and $z_0$. 
Moreover, $U$ can be written as a word $V$ in the alphabet $\{0,1\}$, with every 1 being replaced by 
$1^{k_1}$, and
\be \label{factor2}
P_{U,1}(z_0,z_1)= \Psi_{k_1}(z_1) P_{V,1}(z_0,z_1^{k_1}).
\ee
\label{it:1}
\item $U$ is a palindrome.\label{it:2}
\end{enumerate}
\end{proposition}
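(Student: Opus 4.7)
I treat the two parts separately. For (i), the approach is: apply Lemma~\ref{lem:con1} to factor
\[ P_{U,1}(z_0,z_1) = \pm \prod_i \Phi_{d_i}(z_0^{a_i} z_1^{b_i}), \qquad a_i,b_i\ge 0,\ (a_i,b_i)\ne (0,0), \]
and set $z_0=0$. Factors with $a_i>0$ reduce to the constants $\Phi_{d_i}(0)=\pm 1$, and matching with $P_{U,1}(0,z_1)=\Psi_{k_1}(z_1)$ from \eqref{factor1} shows the pure-$z_1$ factors multiply to $\pm \Psi_{k_1}(z_1)$. Since these are genuine factors of $P_{U,1}$ in $\Z[z_0,z_1]$, I conclude $\Psi_{k_1}(z_1) \mid P_{U,1}(z_0,z_1)$.

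To exploit this divisibility, I write $U=1^{k_1}0^{m_1}1^{k_2}0^{m_2}\cdots 1^{k_r}$ as a sequence of maximal runs and expand
\[ P_{U,1}(z_0,z_1)=\sum_{i=1}^r z_0^{M_{i-1}} z_1^{K_{i-1}} \Psi_{k_i}(z_1), \]
with $M_{i-1}=\sum_{j<i}m_j$, $K_{i-1}=\sum_{j<i}k_j$. Substituting a primitive $k_1$-th root of unity $\omega$ into $z_1$, the divisibility forces $P_{U,1}(z_0,\omega)\equiv 0$; since the $M_{i-1}$ are pairwise distinct, each coefficient $\omega^{K_{i-1}}\Psi_{k_i}(\omega)$ must vanish, so $k_1 \mid k_i$ for all $i$. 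Setting $s_i:=k_i/k_1$ and $V:=1^{s_1}0^{m_1}\cdots 1^{s_r}$, the identity $\Psi_{k_1 s}(z)=\Psi_{k_1}(z)\Psi_s(z^{k_1})$ gives \eqref{factor2} by direct substitution. For the structural claim about the remaining factors, consider $Q(z_0,z_1):=P_{V,1}(z_0,z_1^{k_1})$: since $\mfr(Q)=0$ and $Q(z_0,0)=Q(0,z_1)=1$ (using that $V$ begins with $1$ and $s_1=1$), a second application of Lemma~\ref{lem:con1} forces each extended cyclotomic factor of $Q$ to involve positive powers of both variables.

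For part~(ii), the plan is to exploit the palindromicity of cyclotomic polynomials. First I rule out $\Phi_1$ factors in the factorization of $P_{U,1}$: every extended cyclotomic polynomial $\Phi_1(z_0^a z_1^b)=z_0^a z_1^b-1$ vanishes at $(1,1)$, while $P_{U,1}(1,1)=N_1>0$ (here $N_1$ is the number of $1$'s in $U$). So each factor $\Phi_{d_i}(z_0^{a_i}z_1^{b_i})$ has $d_i\ge 2$ and is palindromic with respect to the bidegree $(a_i\phi(d_i),b_i\phi(d_i))$. Products of bivariate palindromic polynomials are palindromic with respect to the sum of centers, so $P_{U,1}$ is palindromic with respect to $(\sum_i a_i\phi(d_i),\sum_i b_i\phi(d_i))$; matching leading monomials shows this sum equals the actual bidegree $(N_0,N_1-1)$ of $P_{U,1}$. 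Since $P_{U,1}=\sum_{j=1}^{N_1} z_0^{n_j} z_1^{j-1}$ has distinct $0/1$ monomials, palindromicity translates to $n_j+n_{N_1-j+1}=N_0$ for all $j$; via $n_j=p_j-j$ (where $p_j$ is the position of the $j$-th $1$ in $U$) this rewrites as $p_j+p_{N_1-j+1}=|U|+1$, i.e., $U$ is a palindrome. The main technical obstacle is the divisibility $\Psi_{k_1}(z_1)\mid P_{U,1}$ needed in part~(i); after this the remaining steps are elementary.
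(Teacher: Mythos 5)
Your proof is correct, and its overall architecture coincides with the paper's: both identify the product of the pure-$z_1$ factors in the Lemma~\ref{lem:con1} factorization with $\Psi_{k_1}(z_1)$ by isolating the $z_0$-degree-zero part, and both derive the palindromy in (ii) from the reciprocality of $\Phi_d$ for $d\ge 2$ after excluding $\Phi_1$-factors. Where you genuinely diverge is in the derivation of \eqref{factor2}. The paper gets there in one line: from \eqref{factor1} and the divisibility $\Psi_{k_1}(z_1)\mid P_{U,1}$ it deduces $N=|U|_1=k_1k_2$ and the identity $\Psi_{k_1k_2}(z_1)=\Psi_{k_1}(z_1)\Psi_{k_2}(z_1^{k_1})$ at $z_0=1$, asserting that this ``implies \eqref{factor2}.'' That step is terse: \eqref{factor2} requires every maximal run of 1's in $U$ to have length divisible by $k_1$, which does not follow from the specialization $z_0=1$ alone. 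Your argument supplies exactly the missing ingredient: writing $P_{U,1}=\sum_i z_0^{M_{i-1}}z_1^{K_{i-1}}\Psi_{k_i}(z_1)$ over the maximal runs and substituting a primitive $k_1$-th root of unity $\omega$ for $z_1$, the distinctness of the $M_{i-1}$ forces $\Psi_{k_i}(\omega)=0$, hence $k_1\mid k_i$ for every $i$, and \eqref{factor2} then drops out of $\Psi_{k_1 s}(z)=\Psi_{k_1}(z)\Psi_s(z^{k_1})$; the second application of Lemma~\ref{lem:con1} to $Q=P_{V,1}(z_0,z_1^{k_1})$ likewise cleanly settles the claim that the remaining factors involve both variables. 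Two trivial remarks: the evaluation $P_{U,1}(0,z_1)=\Psi_{k_1}(z_1)$ is not \eqref{factor1} (which is the specialization at $z_0=1$) but an immediate consequence of $U$ starting with $1^{k_1}0$; and your exclusion of $\Phi_1$-factors via $P_{U,1}(1,1)=|U|_1>0$ is a valid, slightly simpler substitute for the paper's argument via $P_{U,1}(1,z_1)=\Psi_N(z_1)$.
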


\begin{remark}
It seems plausible that if $\mfr(P_{U,1})=0$ and $U$ starts and ends with a 1, then $U$ may be represented inductively as follows:

{\em There exist $n\ge 1$ and two integer sequences $k_1,\ldots, k_n; \ \ell_1,\ldots, \ell_{n-1}$, such that $k_1\ge 1$, $k_j\ge 2$ for $j\ge 2$;
$\ell_1\ge 1,\ \ell_j\ge 0$ for $j\ge 2$, with $\ell_j \ne \ell_{j-1}$, and
$$
U_1 = 1^{k_1},\ \ U_2 = (U_1 0^{\ell_1})^{k_2-1} U_1,\cdots, U =  U_n = (U_{n-1}0^{\ell_{n-1}})^{k_n-1} U_{n-1}.
$$
Then $P_{U,1}(\bz) = P_{U_n,1}(\bz)$ is a product of extended cyclotomic polynomials, which can be computed inductively, as follows:
\be \label{poly1}
P_{U_1,1}(\bz) = \Psi_{k_1}(z_1);\ \ \ P_{U_j,1}(\bz) = P_{U_{j-1},1}(\bz)\cdot \Psi_{k_j}(z_1^{k_1\cdots k_{j-1}} z_0^{N_{j-1}}),\ j\ge 2,
\ee
with
\be \label{def-Nn}
N_1 = \ell_1,\ \ \ N_j = N_{j-1}k_{j} + \ell_{j} - \ell_{j-1},\ \ j\ge 2.
\ee
}
Thus, this is a sufficient condition to have $\mfr(P_{U,1})=0$. We do not know if it is necessary.
\end{remark}

\begin{example} 
%\begin{enumerate}
(a) Let $n=3$; $k_1 = k_2 = 2,\ k_3 = 3$; $\ell_1 = 1,\ \ell_2 = 0$. Then
$$
U=U_3 = (11011)(11011)(11011),\ \ \ P_{U,1}(\bz) = (1 + z_1)(1 + z_1^2 z_0)(1 + z_1^4 z_0 + z_1^8 z_0^2).
$$
The parentheses in $U$ are just in order to see the structure of the word better.

\smallskip

(b) Let $n=4$;\ $k_1 = 1,\ k_2 = k_3 = k_4 = 2$;\ $\ell_1 =2,\ \ell_2 = 1,\ \ell_3 = 3$. Then
$$
U = U_4 = (1001)0(1001)000(1001)0(1001),
$$
$$
P_{U,1}(\bz) = (1+z_1 z_0^2)(1 +z_1^2 z_0^3)(1+z_1^4 z_0^8) = \bigl(1+z_1z_0^2 + (z_1z_0^2)^4 + (z_1z_0^2)^{5}
\bigr)(1 +z_1^2 z_0^3).
$$
\end{example}

\begin{proof}[Proof of Proposition~\ref{prop-zero0}] Let $\Phi_m$ be the $m$-th  cyclotomic polynomial. Then $\Phi_1(t) = t-1$, and for all 
$m\ge 2$ the polynomial $\Phi_m$ is reciprocal: $\Phi_m(t) = t^{\deg \Phi_m} \Phi_m(t^{-1})$, see, e.g., \cite{Sanna}.

Note that $\Phi_1(z_0^{v_0} z_1^{v_1}) =z_0^{v_0} z_1^{v_1}-1$ cannot appear in the factorization of $P_{U,1}(z_0,z_1)$, since fixing $z_0=1$ we obtain
a factorization of $1+z_1+\cdots + z_1^{k-1}$, which does not have $z_1=1$ as a root.

\smallskip

The term of lowest degree in $P_{U,1}(\bz)$ is equal to 1.  It follows that every extended cyclotomic polynomial, appearing in the factorization, must have the term of lowest degree equal to $1$.
Let $\psi(\bz) = z_0^{b_0} z_1^{b_1} \Phi(z_0^{v_0} z_1^{v_1})$ be one of them, with
$\Phi$ a cyclotomic polynomial, $v_i$ integers, and $b_i= \max\{0, - v_i \deg(\Phi)\}$. By Lemma~\ref{lem:con1} we can assume that 
$b_0 = b_1 =0$.

\smallskip

(i) Suppose that $U$ starts with $1^{k_1}0$, with $k_1\ge 2$. Then $P_{U,1}(\bz)$ starts with $1 + z_1 +\cdots + z_1^{k_1-1} = \Psi_{k_1}(z_1)$, 
followed by $z_0^{\ell_1} z_1^{k_1}$ for some $\ell_1\ge 1$. Note that 
$\Psi_{k_1}(z_1)$ is not irreducible if $k_1$ is not prime, but it can only appear as a product of irreducible cyclotomic polynomials in $z_1$ (and not in monomials with $z_0$ present), hence it has to be one of the factors. It follows from \eqref{factor1} that $N = |U|_1=
k_1 k_2$ for some $k_2\ge 2$, and then
$$
P_{U,1}(1,z_1) = \Psi_{k_1k_2}(z_1) = \Psi_{k_1}(z_1)\Psi_{k_2}(z_1^{k_1}),
$$
which implies \eqref{factor2}.

\smallskip

(ii) It is easy to see that $U$ is palindromic (under our assumption that $U$ starts and ends with a 1) if and only if 
$$
P_{U,1}(z_0,z_1) = z_0^{|U|_0} z_1^{|U|_1-1} P_{U,1}(z_0^{-1},z_1^{-1}).
$$
This holds under our assumptions,
since all the factors in the product are reciprocal polynomials in monomials of the form $z_0^{v_0}z_1^{v_1}$.
\end{proof}

%%%%%%%%%%%%%%%%%%%%%%%%%%

\subsection{General case} Now suppose that $\Sf_\zeta^{\sf T} = \left(\begin{array}{cc} A & B \\ C & D \end{array} \right)$, with $A, B, C, D >0$.
Here we just collect a few ad hoc observations. The following reduction is sometimes useful:

\begin{itemize}
\item if $\zeta(0) = WU,\ \zeta(1)=W$, then
\be \label{eq:reduce}
p_\zeta(\bz) = \bz^{W} p_{\zeta'}(\bz),\ \ \mbox{where}\ \ \zeta'(0) = U,\ \zeta'(1) = W;
\ee

\item  if  $\zeta(0) = UW,\ \zeta(1) = W$, then
%\be \label{eq:reduce2}
$$
p_\zeta(\bz) =  p_{\zeta'}(\bz),\ \ \mbox{where}\ \ \zeta'(0) = U,\ \zeta'(1) = W.
$$
%\ee
\end{itemize}
\vspace{-2mm}
There is, of course, a symmetric case, obtained by exchanging 0 and 1. The proof is a straightforward verification, left to the reader.

\smallskip

Next we are going to use Lemma~\ref{lem:RS}, which says that
$$
\mfr(p_\zeta(\bz))= \mfr(Q_{\zeta,0}(\bz)) = \mfr(Q_{\zeta,1}(\bz)),
$$
where
$$
Q_{\zeta,0}(\bz) = P_{\zeta(01),0}(\bz) - P_{\zeta{(10)},0}(\bz)\ \ \ \mbox{and}\ \ \ 
Q_{\zeta,1}(\bz) = P_{\zeta(10),1}(\bz) - P_{\zeta{(01)},1}(\bz).
$$

Now suppose that $\zeta(0)$ and $\zeta(1)$ have no common initial word (of course, everything can be repeated with a common terminal word).
Without loss of generality, passing to $\zeta^2$, we can assume that $\zeta(0)$ starts with $0$ and $\zeta(1)$ starts with $1$. In this case
both polynomials $Q_{\zeta,0}(\bz)$ and $Q_{\zeta,1}(\bz)$ have the constant term equal to 1. If, say, $\mfr(Q_{\zeta,0}(\bz))=0$, then
Lemma~\ref{lem:con1} applies to this polynomial, and we obtain, in particular, that $Q_{\zeta,0}(z,z)$ must be a product of cyclotomic polynomials
of one variable. 

\smallskip

One might think that pure discrete spectrum
of the substitution measure-preserving system is related to the positivity of the maximal Lyapunov exponent, but this is not the case. As already mentioned
above, any Pisot substitution system on two symbols has pure discrete spectrum. Note that if the substitution $\zeta$ is unimodular, that is, 
$\det(\Sf_\zeta) = \pm 1$, then it is automatically of Pisot type in the two-symbol case, since the Perron-Frobenius eigenvalue is greater than one.
Using the observations above, it is easy to construct examples of unimodular Pisot substitutions for which changing the order of letters in the words 
$\zeta(0), \zeta(1)$ changes the
maximal Lyapunov exponent from zero to a positive number. For instance, consider the following two substitutions with the same unimodular Pisot substitution matrix:
$$
\zeta_1(0) = 01001,\ \zeta_1(1) = 010;\ \ \ \ \ \zeta_2(0) = 00011,\ \zeta_2(1) = 100.
$$
Observe that $\zeta_1 = \zeta_{\rm F}^3$, where $\zeta_{\rm F}(0) = 01,\ \zeta_{\rm F}(1) = 0$ is the Fibonacci substitution. This immediately 
yields that $\mfr(p_{\zeta_1}(\bz)) = 0$, hence $\chi_{\zeta_1,\max}=0$.
Alternatively, using \eqref{eq:reduce} one can see that $p_{\zeta_1}(\bz) = -z_0^4 z_1^2$, which yields the
same conclusion. On the other hand,
$$
Q_{\zeta_2,1} (\bz) = 1 - z_0^3 - z_0^3z_1 - z_0^3z_1^2 + z_0^5 z_1 + z_0^5z_1^2,
$$
and $P(z) = Q_{\zeta_2,1} (z,z) = 1-z^3-z^4-z^5+z^6+z^7$ is such that $P(z) \ne \pm z^{\deg P} \cdot P(1/z)$.
Hence $\mfr(Q_{\zeta_2,1} (\bz))>0$, and we get that $\chi_{\zeta_2,\max}>0$.

%%%%%%%%%%%%%%%%%%%%%%%%%%%%%%%%%

\appendix
\section{Dimension of the spectral measure for irrational rotations}

It was recently shown in \cite[Corollary, p.3]{RS2} that for IET's, corresponding to a permutation of rotation type,
the Lyapunov spectrum of the twisted cocycle with respect to the natural invariant measure is totally degenerate, i.e., all its exponents are equal to zero. On the other hand, in \cite{BuSo20} it was proven that under appropriate 
assumptions, in particular, for primitive substitutions and ``well-behaved'' $S$-adic systems, zero Lyapunov exponents imply that 
the local dimension of spectral measures of cylindrical functions is greater or equal to $2$ (see \cite[Theorem 4.3]{BuSo20}).
Below we demonstrate that this is actually an equality for Lebesgue almost all irrational rotations, by a direct computation.
This should not be interpreted to mean that studying local dimensions is not interesting; in fact, they are very  useful, e.g., in the proofs of singular
spectrum. Moreover, the case of rotations is very special.

\medskip

Consider the irrational rotation on the circle $(\T, R_\theta, m)$. We view $\T$ additively, as $\R/\Z$, and $R_\theta(x) = x+\theta$ (mod 1). It has pure discrete spectrum, with the eigenfunctions 
$\bbe_n = e^{2\pi i nx},\ n\in \Z$, and the corresponding eigenvalues $\lam_n = e^{2\pi i n \theta}$. For $f\in L^2(\T,m)$ we have the usual Fourier 
expansion
$$
f = \sum_{n\in \Z} c_n \bbe_n, \ \ \mbox{with}\ c_n = \what f(n) = \int_0^1 f(x) e^{-2\pi inx}\,dx,\ \ \mbox{and}\ \ \|f\|^2 = \sum_{n\in \Z} |c_n|^2.
$$
On the other hand, the spectral measure $\sig_f$ is determined by
$$
\what\sig_f(-k) = \int_0^1 e^{2\pi i k\om}\,d\sig_f(\om) = \langle U^k_T f, f \rangle = \sum_{n\in \Z} |c_n|^2 e^{2\pi ink\theta}.
\ \ k\in \Z,
$$
Writing $\{x\}$ for the fractional part of $x\in \R$ we obtain
\be \label{eq:meas}
\sig_f = \sum_{n\in \Z} |c_n|^2 \delta_{\{n\theta\}}.
\ee
Let $f = \One_{[0,1-\theta)}$, which corresponds to a simple cylinder function in the symbolic representation of the rotation as a Sturmian $S$-adic system,
see \cite{BeDel,Thus20}.
Then we have
\be \label{eq:coeff}
c_n = \int_0^{1-\theta} e^{-2\pi i nx}\,dx = \frac{1-e^{2\pi i n\theta}}{2\pi i n}.
\ee

\begin{theorem}
For Lebesgue-a.e.\ $\theta$, for Lebesgue-a.e.\ $x\in \T$, holds
$$
d(\sig_f,x):= \lim_{r\to 0} \frac{\log \sig_f(B(x,r))}{\log r} = 2.
$$
\end{theorem}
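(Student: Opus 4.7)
I will establish $d(\sig_f,x)=2$ by matching upper and lower bounds, both reduced to classical distribution theory of $(n\theta)_{n\ge 1}$ modulo $1$. From \eqref{eq:meas} and \eqref{eq:coeff}, the atom at $\{n\theta\}$ has mass
\[
|c_n|^2 \;=\; \frac{\sin^2(\pi n\theta)}{\pi^2 n^2}\quad (n\ne 0),\qquad |c_0|^2=(1-\theta)^2.
\]
Fix $x\in\Tt$ with $\|x\|>0$ (full measure), where $\|\cdot\|$ denotes distance to the nearest integer, and restrict to $r<\|x\|/2$. Then $B(x,r)$ avoids the $n=0$ atom, and every $n\ne 0$ with $\{n\theta\}\in B(x,r)$ satisfies $\|n\theta\|\ge\|x\|/2$, so that $|c_n|^2\asymp n^{-2}$ with $x$-dependent constants. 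This comparison is precisely why $d(\sig_f,x)$ lands at $2$ rather than $1$.

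For the upper bound $d(\sig_f,x)\le 2$: by a classical metric result (Khintchine; Kuipers--Niederreiter), for Lebesgue-a.e.\ $\theta$ the discrepancy $D_N(\theta)$ of $(n\theta)_{n=1}^N$ satisfies $D_N(\theta)\le C_\theta(\log N)^{1+\eps}/N$ for all $N\ge 2$. Consequently, for such $\theta$, for any $x$ and any small $r$, there exists $n\ne 0$ with $|n|\le C_{\theta,\eps}(\log(1/r))^{1+\eps}/r$ and $\|n\theta-x\|<r$. Using $|c_n|^2\gtrsim n^{-2}$ (with $x$-dependent constants), this yields $\sig_f(B(x,r))\ge|c_n|^2\gtrsim r^2/(\log(1/r))^{2+\eps'}$, hence $\limsup_{r\to 0}\log\sig_f(B(x,r))/\log r\le 2$.

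For the lower bound $d(\sig_f,x)\ge 2$, a direct Markov argument fails: Fubini gives $\int_0^1\!\int_0^1\sig_f(B(x,r))\,dx\,d\theta=r$, which overshoots the typical scale $r^2$ by a full order of magnitude because of rare large atoms (small $|n|$) landing close to $x$. I therefore use a dyadic split combined with Borel--Cantelli. Fix $\eps>0$, set $r_k=2^{-k}$, and let $N_j:=\#\{n:2^j\le|n|<2^{j+1},\ \|n\theta-x\|<r_k\}$. For the \emph{low range} $2^j<2^{(1-\eps)k}$, a union bound via Fubini gives
\[
\mathrm{Leb}\bigl\{(\theta,x)\in\Tt^2:\exists\,|n|\le 2^{(1-\eps)k},\ \|n\theta-x\|<2^{-k}\bigr\}\le 6\cdot 2^{-\eps k},
\]
which is summable in $k$, so by Borel--Cantelli a.e.\ $(\theta,x)$ has no low-range contribution for all large $k$. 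For the \emph{high range} $2^j\ge 2^{(1-\eps)k}$, the a.e.\ discrepancy bound gives $N_j\le C(2^jr_k+j^{1+\eps})$; combining with $|c_n|^2\le(\pi 2^j)^{-2}$ and summing the geometric tails yields $\sig_f(B(x,r_k))=O(r_k^{2-\eps})$ on a set of full measure. Monotonicity of $r\mapsto\sig_f(B(x,r))$ extends this to all small $r$, so $\liminf_{r\to 0}\log\sig_f(B(x,r))/\log r\ge 2-\eps$; letting $\eps\downarrow 0$ concludes.

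The main obstacle is the lower bound: the typical and average values of $\sig_f(B(x,r))$ differ by a factor of $r^{-1}$, so the Borel--Cantelli removal of small-index near-collisions and the discrepancy control on the large-index tail are both essential, and neither step suffices alone. A more streamlined argument might be possible via the continued-fraction expansion of $\theta$, but the two-scale approach above already turns the inequality of \cite{BuSo20} into an equality.
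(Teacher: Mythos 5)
Your proof is correct, but it follows a genuinely different route from the paper's. Both arguments reduce, via $|c_n|^2\asymp n^{-2}$ for orbit points landing in $B(x,r)$ with $x$ bounded away from $0$, to estimating $\sum_{n:\,\{n\theta\}\in B(x,r)}n^{-2}$, and both invoke a Borel--Cantelli step to rule out an atypically close low-index orbit point; the difference lies in the number-theoretic input. The paper works with irrationals of type $1$ and proves the upper bound on the dimension by citing the Kim--Park first-return-time theorem (a.e.\ $x$ is within $n^{-(1-\tau)}$ of $\{k\theta\}$ for some $k\le n$), while the lower bound comes from a spacing argument: the type-$1$ condition forces consecutive indices hitting $B(x,r)$ to differ by at least $\wtil c\,r^{-1/(1+\tau)}$, which combined with a Borel--Cantelli lower bound on the first hitting index $n_0$ gives $\sig_f(B(x,r))\le Cr^{2/(1+\tau)}$. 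You instead use the a.e.\ discrepancy bound $D_N(\theta)=O((\log N)^{1+\eps}/N)$ in both directions: for the upper bound on the dimension it produces a hit with $|n|\lesssim (\log(1/r))^{1+\eps}/r$ (and, unlike the paper's argument, this works for \emph{every} $x$ with $\|x\|>0$, not just a.e.\ $x$); for the lower bound you replace the spacing argument by a dyadic decomposition, killing the range $|n|\le 2^{(1-\eps)k}$ by Borel--Cantelli and controlling the counts $N_j$ in the high dyadic blocks by discrepancy. Your explicit observation that the Fubini average $\int\!\!\int\sig_f(B(x,r))\,dx\,d\theta= r$ overshoots the typical value $r^2$, so that a plain first-moment argument cannot work, is a worthwhile point that the paper does not make. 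The trade-off is that the paper's spacing argument gives a cleaner description of the set of hitting indices (an arithmetic-progression-like lower bound on gaps), whereas your version is more self-contained, avoiding both the type-$1$ classification and the Kim--Park theorem at the cost of slightly more bookkeeping in the dyadic summation.
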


\begin{proof}
We consider irrationals of type 1, i.e., $\theta\in (0,1)$ such that
$$
\eta = \sup\{t>0: j^t \liminf_{j\to\infty} \|j\theta\| = 0\}=1.
$$
Here and below we denote by $\|t\|$ the distance from $t$ to the nearest integer. 
By the classical results of Khintchine (see e.g. \cite[Chapter VII]{Cassels}), the set of irrationals of type 1 has full Lebesgue measure.
Fix such a $\theta\in (0,1)$. It is enough to prove the claim for 
Lebesgue-a.e.\ $x\in (\eps,1-\eps)$, for an arbitrary small $\eps>0$. Fix $\eps>0$, and let $r\in (0,\eps/2)$. It follows from \eqref{eq:meas} 
and \eqref{eq:coeff} that
\be \label{eq:meas2}
x\in (\eps, 1-\eps) \implies \sig_f(B(x,r)) \asymp \sum_{n\in \Z:\ \{ n\theta \} \in B(x,r)} n^{-2}.
\ee
Here and below writing $A\asymp B$ means that $C^{-1}B \le A \le CB$ for some constant $C>1$, which may depend on $\theta$ and $\eps$.

We start with the upper bound for the local dimension, which corresponds to the lower bound for $\sig_f(B(x,r))$. Let $\Qk_n$ be the partition of $[0,1)$
by the orbit $\bigl\{\{ k\theta\}\bigl\},\ 0\le k \le n$. Denote by $Q_n(x)$ the element of $\Qk_n$ to which $x$ belongs.
It is a special case of \cite[Theorem 1.1]{KimPark08} (quite likely follows from some earlier results as well) that $\eta=1$ implies
$$
\lim_{n\to \infty} \frac{-\log m(Q_n(x))}{\log n} = 1\ \ \mbox{for $m$-a.e.}\ x\in [0,1),
$$
where $m$ is the Lebesgue measure.
Let $\tau>0$ be arbitrary. We obtain that for Lebesgue-a.e.\ $x\in (\eps,1-\eps)$, for all $n\ge n_0(x,\tau)$ there exists $k\in [0,n]$ such that
$$
\bigl|\{ k\theta \} - x\bigr| < n^{-(1-\tau)},
$$
hence, choosing $n\in \N$ so that $n^{-(1-\tau)}< r \le (n-1)^{-(1-\tau)}$, we obtain
$$
\sig_f(B(x,r))\ge n^{-2} \ge \const\cdot r^{\frac{2}{1-\tau}}.
$$
Since $\tau>0$ was arbitrary, it follows that for a.e.\ $x$,
$$
\lim\sup_{r\to 0} \frac{\log \sig_f(B(x,r))}{\log r} \le 2.
$$

Now we turn to the lower bound for the dimension, that is, to the upper bound for $\sig_f(B(x,r))$. For simplicity, we only estimate the contribution of the forward rotation orbit $\{ n\theta\}$, $n\ge 0$, in \eqref{eq:meas2}. The contribution of the negative orbit is estimated in exactly the same way.

Let $\tau>0$. Since $\theta$ is of type 1, there exists
$k_0 = k_0(\theta,\tau)$, such that
$$
\|k\theta\|\ge \frac{c}{k^{1+\tau}}\ \ \ \mbox{for all}\ \ k\ge k_0.
$$
Let $\delta>0$ be such that the orbit $\{\{ k\theta \},\ 0\le k \le k_0\}$ is $\delta$-separated, and suppose that $r\in (0, \delta/2)$.
It follows that if $n_2 > n_1 \ge k_0$ are such that
$\{ n_1\theta \},\ \{ n_2\theta \}\in B(x,r)$, then
$$
\|(n_2-n_1)\theta\| < 2r < \delta \implies n_2-n_1\ge k_0,
$$
and then
$$
2r > \|(n_2-n_1)\theta\| \ge \frac{c}{(n_2-n_1)^{1+\tau}}.
$$
We obtain that
\be \label{eka1}
n_2 - n_1 \ge \left(\frac{c}{2r}\right)^{1/1+\tau} = \wtil c r^{-1/1+\tau}.
\ee
On  the other hand, for a.e.\ $x\neq \{ n\theta \}$, $n\in \N$, by Borel-Cantelli, there exists $N = N_x$ such that 
$$
x\notin \bigcup_{n=N}^\infty B\Bigl(\{ n\theta \}, \frac{1}{n^{1+\tau}}\Bigr).
$$
We can assume that
$$
0 < r < \min_{n\le N} |x - \{ n\theta \}|.
$$
Let $n_0\in \N$ be minimal such that $\{ n_0\theta \} \in B(x,r)$. Then
\be \label{eka2}
r \ge n_0^{-(1+\tau)} \implies n_0 \ge r^{-\frac{1}{1+\tau}}.
\ee
Combining \eqref{eka1} and \eqref{eka2} we obtain that 
$$
 \sum_{n\in \Z:\ \{ n\theta \} \in B(x,r)} n^{-2} \le \sum_{\ell=0}^\infty \bigl(r^{-1/1+\tau}  +\wtil c \ell r^{-1/1+\tau}\bigr)^{-2}
= r^{2/1+\tau} \sum_{\ell=0}^\infty (1 + \wtil c\ell)^{-2} < C r^{2/1+\tau}.
$$
The contribution of $n<0$ is estimated similarly, and we obtain that for a.e.\ $x$,
$$
\lim\inf_{r\to 0} \frac{\log \sig_f(B(x,r))}{\log r} \ge 2,
$$
concluding the proof.
\end{proof}

%%%%%%%%%%%%%

\subsection*{Acknowledgements} I am grateful to Pedram Safaee for many interesting discussions, which inspired this paper, as well as for showing me some
of the proofs prior to their appearance in \cite{RS2}. Thanks to Chris Smyth for a helpful comment on cyclotomic polynomials and to the anonymous referee for many suggestions which improved the article. This research was supported by the Israel Science Foundation grant \#1647/23.

%%%%%%%%%%%%%

\bigskip

\end{document}